\numberwithin{equation}{section}
\theoremstyle{plain}
\newtheorem{theorem}{Theorem}[section]
\newtheorem{lemma}[theorem]{Lemma}
\newtheorem{proposition}[theorem]{Proposition}
\newtheorem{corollary}[theorem]{Corollary}
\theoremstyle{definition}
\newtheorem{definition}[theorem]{Definition}
\newtheorem*{definition*}{Definition}
\newtheorem{example}[theorem]{Example}
\begin{document}

\title[$\phi\,$-SECTIONAL CURVATURE OF STATISTICAL STRUCTURES \\ ON ALMOST CONTACT METRIC MANIFOLDS
]
{$\phi\,$-SECTIONAL CURVATURE OF STATISTICAL STRUCTURES \\ ON ALMOST CONTACT METRIC MANIFOLDS}

\author[Abbas Heydari, Sadegh Mohammadi]{Abbas Heydari, Sadegh Mohammadi }

\address{Department of Mathematics\\ Faculty of Mathematical Sciences\\ Tarbiat Modares University\\ Tehran 14115-134, Iran}
\email{aheydari@modares.ac.ir , mohammadisadegh67@gmail.com}

\thanks{}



\begin{abstract}
In this article, we study and analyze the $\phi$-sectional curvature induced by a statistical structure on an almost contact metric manifold. We demonstrate that this sectional curvature is always non-positive. Additionally, we present equivalent statements regarding the vanishing of this type of sectional curvature. Furthermore, we derive a sufficient condition for an almost contact statistical manifold to be classified as a cosymplectic statistical manifold. \\
\textbf{2020 Mathematics Subject Classification:} 53B12, 53D15.\\
 \textbf{Key words:} statistical structures, sectional curvatures, almost contact metric manifolds, cosymplectic manifolds.
\end{abstract}

\maketitle

\section{introduction}
Statistical structures were  first introduced in the 1980s by S. L. Lauritzen in \cite{13}. Later, in the article \cite{12}, T. Kurose introduced an equivalent definition of statistical manifolds within the framework of affine differential geometry, which will be discussed in the following section.
Statistical manifolds provide a geometric framework for studying and modeling probability distributions and play a significant role in various fields of science such as neutral networks, machine learning, information geometry, and physics \cite{2,4,7,8,17}. For further reading in this area, Amari and Nagaoka's textbook \cite{3} and references \cite{9}, \cite{13}, and \cite{6} are recommended. Statistical manifolds play a fundamental role in information geometry and the modeling of statistical data, while almost contact manifolds are widely used in Riemannian geometry to describe dynamical and physical systems. Combining these two structures creates a powerful framework that bridges statistical analysis with geometric modeling, offering new insights into complex systems where both statistical and physical components interact. This synthesis not only enhances our understanding of such systems but also opens new avenues for studying nonlinear dynamics and predictive modeling. The concept of the almost contact statistical manifolds was introduced by F. Malek in the article \cite{1}.  
One of the main approaches to studying and identifying manifolds is through their curvature. Since sectional curvature plays a crucial role in understanding a manifold's geometric structure, its study is of great importance. Sectional curvature is a fundamental concept in differential geometry, primarily associated with Riemannian metrics. While the curvature tensor can be defined for any connection, sectional curvature— which assigns a numerical value to a tangent space plane—typically requires a metric tensor. A key observation is that if a tensor can be defined in relation to the Riemannian metric in the same way as the curvature tensor, then the concept of sectional curvature can be extended. This idea was successfully explored in the work of B. Opozda, who introduced statistical sectional curvature corresponding to any statistical structure on a manifold \cite{16}.  
In this paper, we investigate the sectional curvature associated with an almost contact statistical structure on a manifold and establish that this curvature is always non-positive. Additionally, we connect this concept to previous research on statistical structures in almost contact metric manifolds, emphasizing its relevance in understanding their geometric properties. Specifically, we show that the vanishing of this sectional curvature reveals essential characteristics of the statistical structure on an almost contact metric manifold. Finally, we demonstrate that if the statistical connection is compatible with the almost contact metric structure, then every almost contact statistical manifold must be a cosymplectic statistical manifold.  

\section{Statistical Manifolds}
Statistical structures on a manifold $M$ can be defined through three equivalent approaches. Firstly, a statistical manifold is a Riemannian manifold $(M,g)$ equipped with a symmetric 3-covariant tensor field \cite{13}.  For an alternative equivalent definition of statistical manifolds, consider the pair $(g,\nabla)$, where $g\in \otimes^0_2 M$ is a Riemannian metric tensor on $M$, and $\nabla$ is a torsion-free affine connection such that the associated tensor field $\nabla g\in\otimes ^0_3 M$ is symmetric. In this case, the pair $(g, \nabla)$ is called a statistical structure on $M$, and the triplet $(M, g, \nabla)$ is known as a statistical manifold \cite{12}. Every Riemannian manifold $(M, g)$, along with the associated Levi-Civita connection, is a typical example of statistical manifolds. In other words, statistical manifolds can be considered as an extension of Riemannian manifolds.  An alternative but equivalent formulation of statistical manifolds states that the triplet $(M, g, K)$ defines a statistical manifold, where $g\in\otimes^0_2 M$ is a Riemannian metric tensor and $K\in S^2 (M;TM)$ is a symmetric tensor field on $M$ that is symmetric with respect to $g$.  The symmetry of the tensor field $K$ with respect to the Riemannian metric tensor $g$ means that the cubic form $C(X,Y,Z) := g(X,K(Y,Z))$ is symmetric.  The validity of this equivalence is established by jointly considering the mathematical fact that the set of all torsion-free affine connections on the manifold $M$ forms an affine space whose associated vector space is $S^2(M;TM)$, along with the existence of musical isomorphisms in the Riemannian manifold $(M, g)$.  Furthermore, the statistical connection $\nabla$, the Levi-Civita connection $\nabla^\circ$, the tensor field $K$, and the Riemannian metric $g$ satisfy the relationships $K=\nabla -\nabla^\circ$ and $g(X,K(Y,Z))=-2(\nabla _{X} g)(Y,Z)$.  

\section{Almost Contact Metric Manifolds}
This section is based on \cite{5} and has been written accordingly.
\begin{definition} Let $M$ be an odd-dimensional manifold. The triplet $(\phi,\xi,\eta)$, where $\phi$ is a tensor field of type (1,1), $\xi$ is a vector field, and $\eta$ is a 1-form on $M$, is called an almost contact structure on $M$, provided that 
\[ 
	\phi^2=-I+\eta\otimes\xi\quad , \quad \eta(\xi)=1
\]
 are satisfied.  
\end{definition}
If $(\phi,\xi,\eta)$ is an almost contact structure on the manifold $M^{2n+1}$, then 
\begin{align*}
	\phi (\xi)=0\quad , \quad \eta \circ \phi =0 \quad , \quad rank(\phi)=2n
\end{align*}
 hold true.
\begin{definition}
	Suppose $(\phi,\xi,\eta)$ is an almost contact structure on the manifold $M^{2n+1}$. If there exists a Riemannian metric $g\in\otimes ^0_2 M$ such that 
\begin{align*}
	\forall X,Y\in \mathfrak{X}(M);g(\phi X, \phi Y)=g(X,Y)-\eta (X) \eta (Y)
\end{align*} 
holds, then the quadruple $(\phi,\xi,\eta,g)$ defines an almost contact metric manifold.  
\end{definition} 

It is straightforward to see that if $(M,\phi,\xi,\eta ,g)$ is an almost contact metric manifold, then 
\begin{align*}
	g(\xi,\xi)=1\quad ,\quad \eta (X)=g(X,\xi)\quad ,\quad g(\phi X,Y)+g(X,\phi Y)=0.
\end{align*}  
\begin{definition}
An almost contact metric manifold $(M,\phi,\xi,\eta ,g)$ is called a cosymplectic manifold if $\nabla^\circ \phi = 0$, where $\nabla^\circ$ is the unique Levi-Civita connection associated with the Riemannian manifold $(M,g)$.  
\end{definition}
\section{Almost Contact Statistical Manifolds}
\begin{definition} \label{4.1}
	 Suppose $(M,\phi,\xi,\eta ,g)$ is an almost contact metric manifold, and $(g,\nabla = \nabla^\circ+ K)$ is a statistical structure on $M$. Then, $(M,\phi,\xi,\eta ,g,\nabla=\nabla^\circ+K)$ is called an almost contact statistical manifold if 
	\begin{align*}
		\forall X,Y\in \mathfrak{X}(M); K(X,\phi Y)+\phi K(X,Y)=0
	\end{align*}
	 is satisfied \cite{1}.  
\end{definition}
\begin{proposition}
 If $(M, \phi ,\xi , \eta , g)$ is an almost contact metric manifold and $(g, \nabla = \nabla^\circ + K)$ is a statistical structure on $M$, then $(M, \phi ,\xi ,\eta , g,\nabla =\nabla^\circ +K)$ is an almost contact statistical manifold if, and only if, 
 \begin{align*}
 	\forall X,Y\in \mathfrak{X}(M); K(X,\phi Y)=K(\phi X,Y).
 \end{align*}

\end{proposition}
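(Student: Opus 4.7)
The plan is to reduce both implications to two basic symmetry properties recalled earlier. First, the cubic form $C(X,Y,Z):=g(X,K(Y,Z))$ is totally symmetric in its three arguments (in particular, $K(X,Y)=K(Y,X)$), as stated in Section~2. Second, in an almost contact metric manifold, $\phi$ is skew-adjoint with respect to $g$, that is, $g(\phi X,Y)+g(X,\phi Y)=0$, as recorded at the end of Section~3. Both ingredients are available without any extra work.

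For the forward direction, I would assume the defining identity $K(X,\phi Y)+\phi K(X,Y)=0$ from Definition~\ref{4.1} and compute
\[
K(\phi X,Y)=K(Y,\phi X)=-\phi K(Y,X)=-\phi K(X,Y)=K(X,\phi Y),
\]
where the first and third equalities use the symmetry of $K$ and the second and fourth use the defining identity (in the second one, applied to the pair $(Y,X)$). So this direction needs only the pair-symmetry of $K$.

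For the converse, assume $K(\phi X,Y)=K(X,\phi Y)$. To derive $K(X,\phi Y)+\phi K(X,Y)=0$, I would test against an arbitrary $Z\in\mathfrak{X}(M)$ and push the $\phi$ around with the help of the three-fold symmetry of $C$. Concretely, total symmetry gives $g(Z,K(X,\phi Y))=g(X,K(\phi Y,Z))$; the hypothesis applied to the pair $(Y,Z)$ replaces $K(\phi Y,Z)$ by $K(Y,\phi Z)$; total symmetry is then applied once more to rewrite $g(X,K(Y,\phi Z))$ as $g(\phi Z,K(X,Y))$; finally, skew-symmetry of $\phi$ converts this into $-g(Z,\phi K(X,Y))$. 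Since $Z$ is arbitrary and $g$ is nondegenerate, the claim follows.

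The only real (mild) obstacle is the observation that the converse cannot be obtained from pair-symmetry of $K$ alone: one must pass to the cubic form $C$ and use both its three-fold symmetry and the $g$-skew-symmetry of $\phi$ in tandem. Once that is in place, the rest is a short chain of symbolic rearrangements.
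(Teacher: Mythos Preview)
Your proof is correct and follows essentially the same approach as the paper. The forward direction is identical up to bookkeeping, and for the converse both you and the paper test against an arbitrary vector field, use the total symmetry of the cubic form together with the $g$-skew-symmetry of $\phi$, and then conclude via nondegeneracy of $g$; the only cosmetic difference is that the paper starts from $g(K(\phi X,Y),Z)$ and derives $K(\phi X,Y)=-\phi K(X,Y)$, whereas you start from $g(Z,K(X,\phi Y))$ and derive $K(X,\phi Y)=-\phi K(X,Y)$, which are equivalent under the hypothesis.
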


\begin{proof}
If  $(M, \phi ,\xi ,\eta , g,\nabla =\nabla^\circ +K)$ is an almost contact statistical manifold, then 
\begin{align*}
	K(X, \phi Y) = -\phi K(Y, X) = K(\phi X, Y)
\end{align*}
 holds.  
Conversely, if the statistical structure $(g, \nabla = \nabla^\circ+ K)$ on the almost contact metric manifold $(M, \phi ,\xi ,\eta , g)$ is such that $K(X,\phi Y)=K(\phi X,Y) $ is satisfied, then 
\begin{align*}
	g(K(\phi X, Y), Z) &= g(K(\phi X, Z), Y)=g(K(X, \phi Z), Y)\\
	&= g(K(X, Y), \phi Z) = g(-\phi K(X, Y), Z).
\end{align*}
Now, based on the nondegeneracy of the Riemannian metric tensor $g$, we conclude that $K(\phi X, Y) = -\phi K(X, Y)$ holds. Thus, the statistical structure $(g,\nabla =\nabla^\circ + K)$ on $(M,\phi ,\xi ,\eta , g)$ defines an almost contact statistical structure.
\end{proof}
\begin{lemma} \cite{14} \label{4.3} 
Assume that  $ (M, \phi, \xi, \eta, g, \nabla=\nabla^\circ + K) $ is an almost contact statistical manifold. Therefore, we have $K(X, \xi) = \lambda \eta(X) \xi$, where $\lambda = g(K(\xi, \xi), \xi) \in C^{\infty} (M)$.
\end{lemma}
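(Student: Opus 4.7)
The strategy is a two-step one: first prove that $K(X,\xi)$ is always a scalar multiple of $\xi$, then identify the scalar using the symmetry of the cubic form. Both the almost contact identities and the statistical symmetry of $K$ will be used; no curvature ideas enter.

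For the first step, I would combine the two characterizations of ``almost contact statistical'' that have now been established. Definition~\ref{4.1} gives $\phi K(X,Y) = -K(X,\phi Y)$, so taking $Y = \xi$ and recalling $\phi\xi = 0$ yields
\begin{align*}
\phi K(X,\xi) \;=\; -K(X,\phi\xi) \;=\; 0.
\end{align*}
Applying $\phi$ once more and using $\phi^{2} = -I + \eta\otimes\xi$ gives
\begin{align*}
0 \;=\; \phi^{2} K(X,\xi) \;=\; -K(X,\xi) + \eta\bigl(K(X,\xi)\bigr)\xi,
\end{align*}
so $K(X,\xi) = \eta\bigl(K(X,\xi)\bigr)\,\xi$. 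Thus $K(X,\xi)$ is pointwise a multiple of $\xi$, and the whole problem reduces to computing the coefficient $f(X) := \eta(K(X,\xi)) = g(K(X,\xi),\xi)$.

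For the second step, I would specialize first to $X = \xi$: the formula above gives $K(\xi,\xi) = \lambda\,\xi$ with $\lambda := g(K(\xi,\xi),\xi)$, matching the definition in the statement. Then, for arbitrary $X$, I invoke the symmetry of the cubic form $C(X,Y,Z) = g(X,K(Y,Z))$ (which is exactly the statistical hypothesis) together with the symmetry of $K$ in its two arguments:
\begin{align*}
f(X) \;=\; g\bigl(K(X,\xi),\xi\bigr) \;=\; g\bigl(\xi,K(\xi,X)\bigr) \;=\; g\bigl(X,K(\xi,\xi)\bigr) \;=\; \lambda\, g(X,\xi) \;=\; \lambda\,\eta(X).
\end{align*}
Substituting back gives $K(X,\xi) = \lambda\,\eta(X)\,\xi$, as required.

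The only nontrivial point is being careful with the two different symmetries available: the equality $K(\phi X,Y) = K(X,\phi Y)$ from the preceding proposition is what clears $\phi\xi$ out of the way, whereas the full cubic symmetry of $C$ is what allows us to cycle $\xi$ and $X$ through the three slots in the last display. Once these are kept straight, the argument is short; I do not anticipate a serious obstacle, since no analysis of $\nabla^{\circ}\phi$, $\eta$, or curvature tensors is needed.
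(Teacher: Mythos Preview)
Your argument is correct. The paper itself does not prove this lemma---it is quoted from \cite{14} without proof---so there is no in-paper argument to compare against; your two-step approach (first $\phi K(X,\xi)=-K(X,\phi\xi)=0$ together with $\phi^{2}=-I+\eta\otimes\xi$ forces $K(X,\xi)\parallel\xi$, then the total symmetry of the cubic form identifies the coefficient as $\lambda\,\eta(X)$) is the standard and complete route.
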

\begin{corollary}
	If $(M, \phi, \xi, \eta, g, \nabla=\nabla^\circ + K)$ is an almost contact statistical manifold, then:  
\begin{align*}	
	K(\xi, \xi) = \lambda \xi , \quad K(\xi, \xi) = 0 \, \Leftrightarrow \, \lambda = 0.  
\end{align*}	 
\end{corollary}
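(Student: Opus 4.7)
The corollary is essentially an immediate specialization of Lemma~\ref{4.3}, so the plan is to simply plug in $X = \xi$ and then exploit the fact that $\xi$ is nowhere vanishing.

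First, I would invoke Lemma~\ref{4.3}, which gives the identity $K(X,\xi) = \lambda\,\eta(X)\,\xi$ for every $X \in \mathfrak{X}(M)$, where $\lambda = g(K(\xi,\xi),\xi)$. Setting $X = \xi$ and using the normalization $\eta(\xi) = 1$ from the definition of an almost contact structure, the first assertion $K(\xi,\xi) = \lambda\,\xi$ follows immediately.

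For the equivalence, the forward implication $\lambda = 0 \Rightarrow K(\xi,\xi) = 0$ is trivial from the formula just established. The reverse implication uses the fact that $\xi$ is nowhere zero: indeed, from the compatibility of $g$ with the almost contact structure we have $g(\xi,\xi) = 1$, so $\xi$ cannot vanish at any point. Consequently, if $\lambda\,\xi = K(\xi,\xi) = 0$, then evaluating $g(\lambda\xi,\xi) = \lambda = 0$ pointwise gives $\lambda = 0$ as a smooth function on $M$.

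There is essentially no obstacle here; the corollary is a direct unpacking of the previous lemma combined with the two normalizations $\eta(\xi) = 1$ and $g(\xi,\xi) = 1$. The only thing to be careful about is that $\lambda$ is a smooth function rather than a constant, but the pointwise argument above handles this without difficulty.
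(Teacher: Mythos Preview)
Your argument is correct and matches the paper's approach: the paper states this corollary immediately after Lemma~\ref{4.3} without any proof, treating it as an obvious consequence, which is exactly what you have spelled out by substituting $X=\xi$ and using $\eta(\xi)=1$ together with $g(\xi,\xi)=1$.
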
 
Here, we examine some simple properties of almost contact statistical manifolds, which will be useful in the following discussions. Note that some of these properties are independent of the statistical structure defined on the almost contact metric manifold.  

\begin{lemma} \label{4.5}  
Suppose  $(M, \phi, \xi, \eta, g, \nabla=\nabla^\circ + K)$ is an almost contact statistical manifold. Then, the following hold:  
\begin{itemize}
	\item \( g(\phi X, \xi) = 0 \),$\quad$ $\bullet$ \( K(\phi X, \xi) = 0 \),$\quad\bullet$ \( g(X, \phi X) = 0 \),
	\item \( g(\phi^2 X, Y) = g(X, \phi^2 Y) = -g(\phi X, \phi Y) \),
	\item \( K(\phi^2 X, Y) = K(X, \phi^2 Y) = K(\phi X, \phi Y) = \phi^2 K(X, Y) \),
	\item \( \forall X, Y \in \mathfrak{X}(M) ; K(X, \phi Y) = 0 \iff \phi K(X, Y) = 0 \),	
	\item \( \forall X \in \mathfrak{X}(M) ; \phi X = 0 \iff X\parallel\xi  \),
	\item \( \forall X, Y \in \mathfrak{X}(M) ; K(X, Y) = 0 \iff K(X, X) = 0 \),
	\item \( \forall X, Y \in \mathfrak{X}(M), X \perp \xi, Y\perp \xi ; K(X, Y) = 0 \iff K(X, X)=0\),
	\item \( \forall X, Y \in \mathfrak{X}(M) ; K(X, \phi Y) = 0 \iff K(X, \phi X) = 0 \),
	\item \( \forall X, Y \in \mathfrak{X}(M) ; \phi K(X, Y) = 0 \iff \phi K(X, X) = 0 \),	
	\item \( \forall X, Y \in \mathfrak{X}(M) ; \phi K(X, Y) = 0 \iff K(X, Y) \parallel \xi \),
	\item \( \forall X, Y \in \mathfrak{X}(M) ; K(X, Y) \parallel \xi \iff K(X, X) \parallel \xi \).
\end{itemize}	
\end{lemma}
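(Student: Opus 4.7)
The plan is to dispatch the thirteen items by grouping them according to the technique involved: algebraic identities on the almost contact metric structure, the $K$-$\phi$ compatibility, the structure equation for $\phi^{2}$, and polarization of symmetric bilinear forms.

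First I would handle the three items that depend only on the almost contact metric structure. The identity $g(\phi X,\xi)=0$ follows from $\phi\xi=0$ combined with the skew-symmetry $g(\phi X,Y)=-g(X,\phi Y)$ recorded right after Definition 3.2. The identity $g(X,\phi X)=0$ is immediate by setting $Y=X$ in the same skew-symmetry. The relation $g(\phi^{2}X,Y)=g(X,\phi^{2}Y)=-g(\phi X,\phi Y)$ is obtained by applying that skew-symmetry twice.

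Next I would handle the items linking $K$ with $\phi$. The preceding proposition together with Definition \ref{4.1} yields the basic identity $K(\phi X,Y)=K(X,\phi Y)=-\phi K(X,Y)$. From this, $K(\phi X,\xi)=K(X,\phi\xi)=0$; iterating gives $K(\phi^{2}X,Y)=-\phi K(\phi X,Y)=\phi^{2}K(X,Y)$ and likewise for $K(X,\phi^{2}Y)$ and $K(\phi X,\phi Y)$. The equivalence $K(X,\phi Y)=0\iff \phi K(X,Y)=0$ is simply the same identity read in two directions. For item 7, the structure equation $\phi^{2}=-I+\eta\otimes\xi$ gives: if $\phi X=0$ then $0=\phi^{2}X=-X+\eta(X)\xi$, so $X\parallel\xi$; conversely $\phi\xi=0$. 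Item 12 is then item 7 applied to the vector $K(X,Y)$.

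Finally, items 8, 9, 10, 11, and 13 reduce to polarization. For item 8, symmetry of $K$ gives
\[
K(X,Y)=\tfrac{1}{2}\bigl[K(X+Y,X+Y)-K(X,X)-K(Y,Y)\bigr],
\]
so the vanishing of all diagonal values forces $K\equiv 0$. Item 9 is the same argument restricted to $\xi^{\perp}$, which is valid because $X,Y\perp\xi$ implies $X+Y\perp\xi$. For item 10 I would first verify that the bilinear form $(X,Y)\mapsto K(X,\phi Y)$ is symmetric, via $K(X,\phi Y)=K(\phi X,Y)=K(Y,\phi X)$, and then polarize; item 11 is identical, using that $(X,Y)\mapsto \phi K(X,Y)$ is symmetric because $K$ is. Item 13 follows by chaining items 11 and 12. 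The only point that needs any care is the symmetry check preceding each polarization in items 10 and 11; once those are in place, every bullet is a one- or two-line consequence of the identities already established in the section.
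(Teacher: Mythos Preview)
The paper states Lemma \ref{4.5} without proof, introducing it merely as a list of ``simple properties'' to be used later; there is therefore no proof in the paper to compare against. Your argument is correct and supplies exactly the routine verifications the authors suppressed: the metric identities come from the skew-symmetry of $\phi$ with respect to $g$, the $K$-identities from the relation $K(\phi X,Y)=K(X,\phi Y)=-\phi K(X,Y)$ established in Proposition~4.2 and Definition~\ref{4.1}, the kernel statement from $\phi^{2}=-I+\eta\otimes\xi$, and the diagonal-versus-full equivalences from polarization of the relevant symmetric bilinear maps. The one subtlety you flag---checking that $(X,Y)\mapsto K(X,\phi Y)$ is symmetric before polarizing in item~10---is handled correctly via $K(X,\phi Y)=K(\phi X,Y)=K(Y,\phi X)$.
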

\section{$\phi$-Sectional $K$-Curvature On Almost Contact Statistical Manifolds}
The concept discussed in this section has been informed by the definition provided in \cite{16}. An important topic that we will discuss in this paper is the concept of $\phi$-sectional $K$-curvature. We know that for any Riemannian manifold $(M, g)$, the notion of sectional curvature is defined. If $\nabla^\circ$ is the Levi-Civita connection of the Riemannian metric $g$, and $R^{\circ}$ is the curvature-like tensor dependent on the Levi-Civita connection $\nabla^\circ$, then the sectional curvature of the two-dimensional plane $\pi$ in the tangent space $T_p M$ is defined as
\begin{align}
\mathcal{K}^\circ(\pi) := \frac{g(R^\circ(X,Y)Y,X)}{Q(X,Y)},
\end{align} 
where $Q(X,Y) = g(X, X) g(Y,Y) - g(X, Y)^2$ and $X$, $Y$ are two linearly independent vectors in the plane $\pi$. 
Now, assume that  $(M, \phi, \xi, \eta, g, \nabla =\nabla^\circ +K)$  is an almost contact statistical manifold.  
In this case, with the help of the tensor field $K$, we can construct a curvature-like tensor on $M$ and use it to define the concept of $\phi$-sectional $K$-curvature for the almost contact statistical manifold  $(M, \phi, \xi, \eta, g, \nabla =\nabla^\circ +K)$.  
For this purpose, consider a special case where $V$ is an $n$-dimensional vector space equipped with a positive definite scalar product $g$. Let $K$ be a symmetric tensor of type $(1,2)$ on $V$ that is symmetric with respect to $g$, i.e., $g(X, K(Y, Z)) = g(Y, K(X, Z))$.  
This allows us to define an operator $K_X$, given by 
$K_X(Y):= K(X,Y)$ which produces a symmetric tensor with respect to $g$; $g(X,K_Z(Y))=g(Y,K_Z(X))$.
On the other hand, $K$ defines a symmetric cubic form on V as $C(X,Y,Z):=g(X,K(Y,Z))$ and a tensor 
$[K,K]$ as
\begin{align*}
[K,K](X,Y)Z:=[K_X,K_Y]Z:=K_X K_Y Z - K_Y K_X Z.
\end{align*}
Considering that $[K, K]$ possesses the following properties:
\begin{align*} 
	&[K,K](X,Y)Z+[K,K](Y,Z)X+[K,K](Z,X)Y=0,\\
	&g([K,K](X,Y)Z,W)=-g([K,K](Y,X)Z,W),\\
	&g([K,K](X,Y)Z,W)=-g([K,K](X,Y)W,Z),\\
	&g([K,K](X,Y)Z,W)=g([K,K](Z,W)X,Y),
\end{align*} 
it is indeed a curvature-like tensor on $V$, and the crucial aspect here is that we can define the sectional $K$-curvature of the two-dimensional plane $V$.
\begin{definition}
Assume that $(M, \phi, \xi, \eta, g, \nabla = \nabla^\circ + K)$  is an almost contact statistical manifold. A two-dimensional vector subspace $\pi$ of the tangent space $T_p M$ is called a $\phi$-section of $T_p M$ if there exists a vector $X \in T_p M$ that is orthogonal to $\xi$ such that $\{X, \phi X\}$ spans the vector subspace $\pi$.
The $\phi$-sectional $K$-curvature of $M$ with respect to $\pi$ is given by the formula
\begin{align*}
	\mathcal{K}_\phi (X) := \frac{g([K,K](X,\phi X)\phi X,X)}{Q(X,\phi X)},
	\end{align*}
where $Q(X, \phi X) = g(X, X) g(\phi X,\phi X) - g(X,\phi X)^2$. The value of $\mathcal{K}_\phi (X)$ 
is well-defined, as it independent of the specific choice of the linearly independent set
$\{X, \phi X\}$ from the plane $\pi$. If, for every $p \in M$, the $\phi$-sectional $K$-curvature of $M$ is constant with respect to all $\phi$-sections of the tangent space $T_p M$, then $M$ is a manifold with constant $\phi$-sectional $K$-curvature.
\end{definition}
We know that for every almost contact metric manifold $(M, \phi, \xi, \eta, g)$,  
there exists an orthonormal basis of the form  
$\{ e_1, \dots, e_n, \phi e_1, \dots, \phi e_n, \xi \} \subseteq T_p M$,  
which is called a $\phi$-basis \cite{5}. Therefore, any two-dimensional plane 
$\pi = span \{e_i,\phi e_i\}\leq T_p M$ is a $\phi$-section.\\
For any connection $\nabla$ on a Riemannian manifold $(M,g)$, one can define its conjugate connection
$\bar{\nabla}$ (with respect to $g$) by the following formula:
\begin{align*}
	g(\nabla_{X} Y,Z) + g(Y,\bar{\nabla}_{X} Z)=X\cdot g(Y,Z),
\end{align*}
for any vector fields $X$, $Y$, $Z$ on $M$.
If $(g, \nabla)$ is a statistical structure, then so is $(g, \bar{\nabla})$.
If $R$ is the curvature tensor of $\nabla$, and $\bar{R}$ is the curvature tensor of $\bar{\nabla}$, then for all $X$, $Y$, $Z$, $W$ we have $g(R(X,Y)Z,W)=-g(\bar{R}(X,Y)W,Z)$ \cite{15}.
Let $K$ be the difference tensor between $\nabla$ and the Levi-Civita connection $\nabla^{\circ}$, that is,
\begin{align*}
	\nabla_{X} Y=\nabla^{\circ}_{X} Y +K(X,Y),
\end{align*}

then we also have
\begin{align*}
		\bar{\nabla}_{X} Y=\nabla^{\circ}_{X} Y -K(X,Y).
\end{align*}

Now, if $(g, \nabla=\nabla^{\circ}+K)$ is a statistical structure on the manifold $M$, then the tensor
\begin{align*}
	S(X,Y)Z:=\frac{1}{2}\{R(X,Y)Z+\bar{R}(X,Y)Z\},
\end{align*}

called the statistical curvature tensor field, satisfies the following properties \cite{10}:
\begin{align*}
&S(X,Y)Z+S(Y,Z)X+S(Z,X)Y=0,\\
&g(S(X,Y)Z,W)=-g(S(Y,X)Z,W),\\
&g(S(X,Y)Z,W)=-g(S(X,Y)W,Z),\\
&g(S(X,Y)Z,W)=g(S(Z,W)X,Y).
\end{align*}

Therefore, the tensor $S$ allows us to define a curvature-like notion, called the statistical sectional curvature.
\begin{proposition} \cite{11}
If $(M,g, \nabla=\nabla^{\circ}+K)$ is a statistical manifold, then
	$S=R^{\circ}+[K,K]$,
where $R^{\circ}$ is the Riemannian curvature tensor field of the Riemannian manifold $(M, g)$.
\end{proposition}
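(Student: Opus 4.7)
The plan is to unpack the definitions of the curvature tensors of $\nabla$ and $\bar\nabla$ and exploit the symmetry between $\nabla = \nabla^\circ + K$ and $\bar\nabla = \nabla^\circ - K$. Since the statistical curvature tensor $S$ is the average of $R$ and $\bar R$, the odd-in-$K$ contributions should cancel, leaving behind only the even pieces, namely $R^\circ$ and a quadratic expression in $K$ that one expects to be precisely $[K,K]$.

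Concretely, I would first expand
\begin{align*}
R(X,Y)Z = \nabla_X \nabla_Y Z - \nabla_Y \nabla_X Z - \nabla_{[X,Y]} Z
\end{align*}
by substituting $\nabla = \nabla^\circ + K$ everywhere. A short calculation, using that $\nabla^\circ$ is torsion-free so $[X,Y] = \nabla^\circ_X Y - \nabla^\circ_Y X$, shows that the terms of the form $K(\nabla^\circ_X Y, Z)$ and $K(\nabla^\circ_Y X, Z)$ exactly cancel against $K([X,Y],Z)$. What remains regroups into three pieces: the Levi--Civita curvature $R^\circ(X,Y)Z$, a linear-in-$K$ piece $(\nabla^\circ_X K)(Y,Z) - (\nabla^\circ_Y K)(X,Z)$, and a quadratic-in-$K$ piece $K_X K_Y Z - K_Y K_X Z = [K,K](X,Y)Z$.

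Next, I would perform the same expansion for $\bar R$, simply by replacing $K$ with $-K$. The linear-in-$K$ piece then changes sign, while the Levi--Civita piece and the quadratic piece $[(-K),(-K)] = [K,K]$ remain unchanged. Averaging as in the definition of $S(X,Y)Z = \tfrac{1}{2}(R(X,Y)Z + \bar R(X,Y)Z)$ kills the covariant-derivative terms and yields $S = R^\circ + [K,K]$, as claimed.

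I don't expect any genuine obstacle here: the whole argument is a careful bookkeeping exercise in the identity $\nabla = \nabla^\circ + K$. The one place where a slip is easy is in verifying that the Lie-bracket term absorbs the $K(\nabla^\circ_X Y, Z) - K(\nabla^\circ_Y X, Z)$ contributions, which relies on the symmetry of $K$ in its two arguments together with the torsion-freeness of $\nabla^\circ$; once that is in place, the cancellation upon averaging $R$ and $\bar R$ is automatic and the proposition follows.
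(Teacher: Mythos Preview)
Your argument is correct and is the standard derivation of this identity: expanding $R$ with $\nabla=\nabla^\circ+K$ gives $R=R^\circ+(\nabla^\circ_XK)(Y,Z)-(\nabla^\circ_YK)(X,Z)+[K,K]$, replacing $K$ by $-K$ gives the analogous formula for $\bar R$, and averaging kills the odd part. The paper itself does not prove this proposition---it is simply quoted from \cite{11}---so there is no in-paper argument to compare against; your computation is exactly what one expects that citation to contain.

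One minor remark on your last paragraph: the cancellation of $K(\nabla^\circ_X Y,Z)-K(\nabla^\circ_Y X,Z)$ against $K([X,Y],Z)$ uses only the torsion-freeness of $\nabla^\circ$ and linearity of $K$ in its first slot, not the symmetry of $K$. Where the symmetry of $K$ genuinely enters is in ensuring that $\bar\nabla=\nabla^\circ-K$ is again torsion-free, so that the same curvature expansion applies verbatim to $\bar R$; without it the averaging step would not make sense as stated.
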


Hence, on a statistical manifold $(M,g, \nabla=\nabla^{\circ}+K)$,
if $\mathcal{K}^S$, $\mathcal{K}^{\circ}$, and $\mathcal{K}$ denote the statistical sectional curvature, the Riemannian sectional curvature, and the sectional $K$-curvature, respectively, then the equality
\begin{align*}
	\mathcal{K} _{\phi}^{S}=\mathcal{K} _{\phi}^{\circ}+\mathcal{K} _{\phi},
\end{align*}
describes the relation among these types of sectional curvatures.
\begin{theorem} \label{5.3}
	Assume that $(M, \phi, \xi, \eta, g, \nabla = \nabla^\circ + K)$ is an almost contact statistical manifold. Then, the $\phi$-sectional $K$-curvature of the manifold $M$ is non-positive.
\end{theorem}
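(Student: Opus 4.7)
My plan is to show the numerator of $\mathcal{K}_\phi(X)$ is non-positive while its denominator is strictly positive. For the denominator, I would invoke Lemma 4.5 and the condition $X\perp\xi$ built into the definition of a $\phi$-section: this gives $g(X,\phi X)=0$ and $g(\phi X,\phi X)=g(X,X)-\eta(X)^2=g(X,X)$, so $Q(X,\phi X)=g(X,X)^2>0$. All the substantive work lies in the numerator.

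The key algebraic step I would exploit is that, by the total symmetry of the cubic form $C(A,B,C)=g(A,K(B,C))$, the commutator bracket admits the "inner-product" representation
\[ g([K,K](X,Y)Z,W) = g(K(X,W),K(Y,Z)) - g(K(Y,W),K(X,Z)). \]
Specialising to $Y=Z=\phi X$, $W=X$ and setting $V:=K(X,X)$, I would then apply the almost contact statistical identities $K(\phi X,\phi X)=\phi^2 V$ and $K(X,\phi X)=-\phi V$ (from Definition 4.1 and Lemma 4.5, together with the symmetry of $K$) to collapse the numerator to $g(V,\phi^2 V)-g(\phi V,\phi V)$. Substituting $\phi^2 V=-V+\eta(V)\xi$ and the metric compatibility $g(\phi V,\phi V)=g(V,V)-\eta(V)^2$ yields the closed form
\[ g([K,K](X,\phi X)\phi X,X) = -2\bigl(g(V,V)-\eta(V)^2\bigr). \]
Since $g$ is positive definite with $g(\xi,\xi)=1$, Cauchy--Schwarz gives $\eta(V)^2=g(V,\xi)^2\le g(V,V)$, so the right-hand side is $\le 0$ and the theorem follows.

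I expect the only real obstacle to be the careful manipulation leading to the symmetric bracket identity displayed above, since one must keep the $\phi$-sign conventions straight and correctly chase the cubic-form symmetry through the nested $K$ to land on the right inner product. Once that identity is in hand, the conclusion reduces to a one-line application of Cauchy--Schwarz. As a side observation, equality in the bound forces $V=K(X,X)$ to be parallel to $\xi$; combined with Lemma 4.3 and $X\perp\xi$ (which give $\eta(V)=g(K(\xi,X),X)=0$), this further forces $K(X,X)=0$, a characterisation of vanishing $\phi$-sectional $K$-curvature that fits naturally with the equivalent conditions explored in the next section.
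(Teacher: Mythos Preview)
Your proof is correct, but it is organised differently from the paper's and ends a hair short of the sharp formula. The paper computes $g([K,K](X,\phi X)\phi X,X)$ by expanding the nested $K$'s directly, using the identities $K(\phi X,\phi X)=\phi^2 K(X,X)$ and $K(X,\phi X)=-\phi K(X,X)$ to reach $2g(\phi^2 K(X,K(X,X)),X)$, and then invokes $\eta(X)=0$ (built into the $\phi$-section hypothesis) to obtain the exact value $-2\|K(X,X)\|^2$. You instead first pass through the pleasant inner-product identity $g([K,K](X,Y)Z,W)=g(K(X,W),K(Y,Z))-g(K(Y,W),K(X,Z))$ and arrive at $-2\bigl(\|V\|^2-\eta(V)^2\bigr)=-2\|\phi V\|^2$, which is already manifestly $\le 0$; the Cauchy--Schwarz step is therefore not really needed. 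In fact your ``side observation'' that $\eta(V)=g(K(\xi,X),X)=0$ (via Lemma~4.3 and $X\perp\xi$) shows your expression equals the paper's $-2\|K(X,X)\|^2$ on the nose, and folding that in would make the equality-case discussion immediate rather than a separate remark. Both routes are clean; yours has the advantage of isolating a reusable bracket identity, while the paper's gets the exact constant in one pass and feeds directly into Corollary~5.4.
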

\begin{proof}
	Suppose that $\pi=span\{X,\phi X\}$ is a $\phi$-section of the tangent space $T_p M$.
	Now, to prove the theorem, we first compute the value of
	\begin{align*}
	g([K,K](X,\phi X)\phi X,X)&=
	g(K(X, K(\phi X, \phi X)), X) - g(K(\phi X, K(X, \phi X)), X) \\
	&=g(K(X, \phi^2 K(X, X)), X) - g(K(\phi X, -\phi K(X, X)), X) \\
	&=2 g(\phi^2 K(X, K(X, X)), X) \\
	&=-2 g(K(X, K(X, X)), X) + 2 \eta(K(X, K(X, X))) \eta(X) \\
	&=-2 g(K(X, X), K(X, X)) \\
	&= - 2 \|K(X, X)\|^2.
	\end{align*}
   Then, by obtaining the value of
	\begin{align*}
		Q(X,\phi X) &= g(X, X) g(\phi X, \phi X) - g(X, \phi X)^2 = g(X, X) g(\phi X, \phi X)\\
		&= g(X, X)(g(X, X) - \eta(X) \eta(X)) = g(X, X)^2 = \|X\|^4,
	\end{align*}
	we conclude that
	\begin{align*}
		\mathcal{K} _{\phi}(X)= -2 \frac{\|K(X, X)\|^2}{\|X\|^4} \leq 0.
	\end{align*}
\end{proof}
\begin{corollary} \label{5.4}
	Let $(M, \phi, \xi, \eta, g, \nabla = \nabla^\circ + K)$ be an almost contact statistical manifold. Then:
	\begin{itemize}
	\item \( 	\mathcal{K} _{\phi}^{S} \leq \mathcal{K} _{\phi}^{\circ} \),  
	\item If $\phi$-sectional $K$-curvature is non-negative, then \( \mathcal{K} _{\phi} = 0 \),  
	\item \( \mathcal{K} _{\phi} = 0 \) if and only if 
	\( \mathcal{K} _{\phi}^{S} = \mathcal{K} _{\phi}^{\circ} \), 
	\item \( \mathcal{K} _{\phi} = 0 \) if and only if for every vector field \( X \) orthogonal to the Reeb vector field \( \xi \), we have \( K(X,X) = 0 \). In other words,
	 \( \mathcal{K} _{\phi} = 0 \iff \forall X \in \mathfrak{X}(M), X \perp \xi ; \; K(X, X) = 0 \).
		\end{itemize} 
\end{corollary}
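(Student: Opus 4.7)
The plan is to derive all four bullets as direct consequences of Theorem~\ref{5.3} together with the decomposition $\mathcal{K}_\phi^S = \mathcal{K}_\phi^\circ + \mathcal{K}_\phi$ stated just before Theorem~\ref{5.3}. The proof in the paper already produced the closed-form expression
\[
\mathcal{K}_\phi(X) = -2\,\frac{\|K(X,X)\|^2}{\|X\|^4},
\]
for any $X\perp\xi$, and this single identity, combined with the sign $\mathcal{K}_\phi\le 0$ from Theorem~\ref{5.3}, carries essentially all the content.

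For the first bullet, I would simply substitute $\mathcal{K}_\phi\le 0$ into $\mathcal{K}_\phi^S = \mathcal{K}_\phi^\circ + \mathcal{K}_\phi$ to obtain $\mathcal{K}_\phi^S \le \mathcal{K}_\phi^\circ$. For the second bullet, the hypothesis $\mathcal{K}_\phi\ge 0$ together with Theorem~\ref{5.3}'s bound $\mathcal{K}_\phi\le 0$ forces $\mathcal{K}_\phi = 0$ on every $\phi$-section. For the third bullet, rearranging the decomposition gives $\mathcal{K}_\phi = \mathcal{K}_\phi^S - \mathcal{K}_\phi^\circ$, so the vanishing of one side is equivalent to the equality on the other.

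The fourth bullet is the only one requiring a brief argument rather than a one-line deduction, and it is where I would focus the writing. From the boxed formula above, $\mathcal{K}_\phi(X) = 0$ on a $\phi$-section $\pi = \operatorname{span}\{X,\phi X\}$ (with $X\perp\xi$ and $X\neq 0$) if and only if $\|K(X,X)\|^2 = 0$, i.e.\ $K(X,X)=0$. Since every $\phi$-section is, by definition, spanned by some $X\perp\xi$, the vanishing of $\mathcal{K}_\phi$ on all $\phi$-sections is equivalent to $K(X,X) = 0$ for every $X\in\mathfrak{X}(M)$ with $X\perp\xi$. I would also remark, for completeness, that by Lemma~\ref{4.5} the condition $K(X,X)=0$ for all $X\perp\xi$ is in fact equivalent to $K(X,Y)=0$ for all $X,Y\perp\xi$, although this stronger reformulation is not needed for the stated bullet.

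No real obstacle is expected here: the heavy lifting was done in the proof of Theorem~\ref{5.3}, and this corollary only repackages that formula together with the additive decomposition of statistical sectional curvature. The only mild care point is to make sure the quantifier in the fourth bullet is formulated over vectors $X\perp\xi$ (rather than all vector fields), which is exactly what the definition of a $\phi$-section requires.
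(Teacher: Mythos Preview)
Your proposal is correct and matches the paper's approach: the corollary is stated without an explicit proof precisely because each bullet follows immediately from the formula $\mathcal{K}_\phi(X)=-2\|K(X,X)\|^2/\|X\|^4$ in Theorem~\ref{5.3} together with the decomposition $\mathcal{K}_\phi^S=\mathcal{K}_\phi^\circ+\mathcal{K}_\phi$. Your handling of the fourth bullet is exactly the intended reading, and the extra remark via Lemma~\ref{4.5} is a harmless bonus.
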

\begin{proposition} \label{5.5}
	If $(M, \phi, \xi, \eta, g, \nabla = \nabla^\circ + K)$ is an almost contact statistical manifold, then the following statements are equivalent:
	\begin{itemize}
		\item 	\( \mathcal{K} _{\phi} = 0 \),
		\item   \( K=\lambda \eta \otimes \eta \otimes \xi = \eta \otimes \eta \otimes K(\xi , \xi) \),
		\item   \( [K, K]=0\ \).	
	\end{itemize}
\end{proposition}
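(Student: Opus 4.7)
The plan is to prove the equivalences through the cycle $(1)\Rightarrow(2)\Rightarrow(3)\Rightarrow(1)$, exploiting Theorem~\ref{5.3} for the first implication and elementary tensor-algebra for the others.

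For $(1)\Rightarrow(2)$, I would start from the last bullet of Corollary~\ref{5.4}, which says that $\mathcal{K}_\phi=0$ is equivalent to $K(X,X)=0$ for every $X\perp\xi$. Then I would invoke the polarization-type bullet in Lemma~\ref{4.5} (the one stating $K(X,Y)=0\Leftrightarrow K(X,X)=0$ when both arguments are orthogonal to $\xi$) to upgrade this to $K(X,Y)=0$ for all $X,Y\perp\xi$. For arbitrary vector fields $X,Y$, I would decompose $X=X_0+\eta(X)\xi$ and $Y=Y_0+\eta(Y)\xi$ with $X_0,Y_0\perp\xi$, expand $K(X,Y)$ by bilinearity, and use Lemma~\ref{4.3} (which gives $K(V,\xi)=\lambda\eta(V)\xi$) to evaluate every cross term. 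The terms $K(X_0,\eta(Y)\xi)$ and $K(\eta(X)\xi,Y_0)$ vanish because $\eta(X_0)=\eta(Y_0)=0$, and the mixed $\xi$-$\xi$ term contributes $\lambda\eta(X)\eta(Y)\xi$. This yields $K=\lambda\,\eta\otimes\eta\otimes\xi=\eta\otimes\eta\otimes K(\xi,\xi)$.

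For $(2)\Rightarrow(3)$, the computation is a short direct check: if $K(X,Y)=\lambda\eta(X)\eta(Y)\xi$, then
\begin{align*}
K_XK_YZ=K\bigl(X,\lambda\eta(Y)\eta(Z)\xi\bigr)=\lambda\eta(Y)\eta(Z)\,K(X,\xi)=\lambda^2\eta(X)\eta(Y)\eta(Z)\,\xi,
\end{align*}
which is symmetric in $X,Y$, so $[K_X,K_Y]Z=0$ for all $X,Y,Z$, i.e.\ $[K,K]=0$.

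For $(3)\Rightarrow(1)$, the conclusion is immediate from the definition: if $[K,K]=0$ identically, then $g([K,K](X,\phi X)\phi X,X)=0$ for every $X\perp\xi$, hence $\mathcal{K}_\phi(X)=0$ by the formula proved in Theorem~\ref{5.3}.

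The only step requiring any real care is the bilinear expansion in $(1)\Rightarrow(2)$; once one commits to decomposing along $\xi$ and its orthogonal complement and applies Lemma~\ref{4.3} to each $\xi$-containing term, the identification $K=\lambda\,\eta\otimes\eta\otimes\xi$ falls out cleanly. The reverse direction $(2)\Rightarrow(1)$ is not needed explicitly since the cycle closes through $(3)$, but it is also transparent: $K(X,X)=\lambda\eta(X)^2\xi$ vanishes on $X\perp\xi$.
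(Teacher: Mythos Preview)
Your argument is correct and, for the main implication $(1)\Rightarrow(2)$, follows exactly the paper's route: decompose along $\xi$ and its orthogonal complement, use Corollary~\ref{5.4} to kill $K$ on the horizontal part, and evaluate the $\xi$-containing terms via Lemma~\ref{4.3}. The only noteworthy difference is in how the third item is handled: the paper disposes of the equivalence with $[K,K]=0$ by citing an external result (Lemma~3.5 of \cite{14}), whereas you close the cycle with short self-contained computations for $(2)\Rightarrow(3)$ and $(3)\Rightarrow(1)$. Your version is more transparent and avoids the dependence on \cite{14}; the paper's version is terser but leaves the reader to consult another source.
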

\begin{proof}
	Assume that $\mathcal{K} _{\phi} = 0$. It is known that any vector field $X$ on $M$ can be decomposed into its horizontal and vertical components, namely
	$X - \eta(X)\xi$ and $\eta(X) \xi$. Now, by applying Corollary \ref{5.4} and Lemma \ref{4.3}, one can conclude that:
	\begin{align*}
		K(X, X) =&
		K(X-\eta(X)\xi, Y-\eta(Y)\xi) + K(X-\eta(X)\xi, \eta(Y)\xi)\\
		&+K(\eta(X)\xi, Y-\eta(Y)\xi) + K(\eta(X)\xi, \eta(Y)\xi)\\
		=&\eta(Y)K(X-\eta(X)\xi, \xi) +
		\eta(X)K(\xi, Y-\eta(Y)\xi) +
		\eta(X)\eta(Y)K(\xi, \xi)\\
		=&\eta(Y)K(X,\xi) - \eta(X)\eta(Y)K(\xi,\xi)\\
		&+\eta(X)K(Y,\xi) - \eta(X)\eta(Y)K(\xi,\xi)\\
		&+\eta(X)\eta(Y)K(\xi,\xi)\\
		=&\lambda\eta(X)\eta(Y)\xi.
	\end{align*}
	Conversely, assuming the validity of the equality $K(X, X) = \lambda \eta(X) \eta(X) \xi =0$,
	and observing that for any vector field $X$ orthogonal $\xi$ to, we have $K(X,X) = 0$,
	it follows directly from Corollary \ref{5.4} that $\mathcal{K} _{\phi} = 0$.
    Finally, according to Lemma 3.5 from \cite{14}, the proof of the proposition is complete.   
\end{proof}
\begin{lemma} \label{5.6}
Assume that $(M, \phi, \xi, \eta, g, \nabla = \nabla^\circ + K)$ is an almost contact statistical manifold. Then, for any two vector fields $X$ and $Y$, the relation 
\begin{align*}
	(\nabla_X^\circ \phi)Y = (\nabla_X \phi)Y + 2\phi K(X, Y),
\end{align*}
is satisfied.
\end{lemma}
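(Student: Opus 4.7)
The plan is a direct, two-line computation using the definition of the covariant derivative of the tensor field $\phi$ together with the decomposition $\nabla = \nabla^\circ + K$ and the defining identity of an almost contact statistical manifold (Definition \ref{4.1}).

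First I would expand $(\nabla_X \phi)Y$ via the Leibniz rule,
\begin{align*}
(\nabla_X \phi)Y = \nabla_X(\phi Y) - \phi(\nabla_X Y),
\end{align*}
and then substitute $\nabla_X Z = \nabla^\circ_X Z + K(X,Z)$ in each of the two terms. The Levi-Civita parts reassemble into $(\nabla^\circ_X \phi)Y$, while the $K$-terms give a remainder $K(X,\phi Y) - \phi K(X,Y)$, so that
\begin{align*}
(\nabla_X \phi)Y = (\nabla^\circ_X \phi)Y + K(X,\phi Y) - \phi K(X,Y).
\end{align*}

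Next I would invoke the defining relation of an almost contact statistical manifold, namely $K(X,\phi Y) + \phi K(X,Y) = 0$, i.e.\ $K(X,\phi Y) = -\phi K(X,Y)$. Substituting this into the previous display collapses the remainder to $-2\phi K(X,Y)$, yielding
\begin{align*}
(\nabla_X \phi)Y = (\nabla^\circ_X \phi)Y - 2\phi K(X,Y),
\end{align*}
which rearranges to the stated identity $(\nabla^\circ_X \phi)Y = (\nabla_X \phi)Y + 2\phi K(X,Y)$.

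There is no real obstacle here: the argument is purely formal, relying only on the definition of the tensorial covariant derivative of a $(1,1)$-tensor, the affine relation between $\nabla$ and $\nabla^\circ$, and the compatibility identity built into Definition \ref{4.1}.
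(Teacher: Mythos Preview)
Your proof is correct and follows essentially the same approach as the paper: the paper expands $(\nabla^\circ_X\phi)Y$ and substitutes $\nabla^\circ=\nabla-K$, while you expand $(\nabla_X\phi)Y$ and substitute $\nabla=\nabla^\circ+K$, but in both cases the computation reduces to the same remainder $K(X,\phi Y)-\phi K(X,Y)$ (up to sign) and is then simplified via the defining identity $K(X,\phi Y)=-\phi K(X,Y)$.
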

\begin{proof}
	Since $\nabla^{\circ} = \nabla - K$, it follows that:
	\begin{align*}
(\nabla^{\circ}_X \phi)Y &= \nabla^{\circ}_X \phi Y - \phi(\nabla^{\circ}_X Y)\\
&=(\nabla - K)_X \phi Y - \phi(\nabla_X Y - K(X,Y))\\
&= \nabla_X \phi Y - \phi \nabla_X Y - K(X,\phi Y) + \phi K(X,Y)\\
&=\nabla_X \phi Y + 2 \phi K(X,Y).
    \end{align*}	
\end{proof}
\begin{proposition}
	Assume that $(M, \phi, \xi, \eta, g, \nabla = \nabla^\circ + K)$ is an almost contact statistical manifold. Then, the following equivalence holds:
	\begin{align*}
		\mathcal{K} _{\phi} = 0 \iff \forall X,Y \in \mathfrak{X}(M) ; \; K(X, \phi Y) = 0.
	\end{align*}
\end{proposition}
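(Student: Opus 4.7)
The plan is to reduce both implications to results already proved in the paper, specifically Proposition \ref{5.5} and Corollary \ref{5.4}, so that essentially no new curvature computation is required.

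For the forward direction, I would simply invoke Proposition \ref{5.5}: the assumption $\mathcal{K}_{\phi} = 0$ gives $K = \eta \otimes \eta \otimes K(\xi,\xi)$, so that $K(X,\phi Y) = \eta(X)\,\eta(\phi Y)\,K(\xi,\xi)$. Since $\eta \circ \phi = 0$ (a basic identity from Section 3), the factor $\eta(\phi Y)$ vanishes and the conclusion follows. This step is purely formal and costs nothing.

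The reverse direction is the substantive part, but it should still be short. The key observation is that the image of $\phi$ coincides with $\xi^{\perp}$: every vector of the form $\phi Y$ is orthogonal to $\xi$ (because $\eta \circ \phi = 0$), and $\phi$ has rank $2n$, which equals the dimension of $\xi^{\perp}$. Hence the hypothesis $K(X, \phi Y) = 0$ for all $X, Y$ is equivalent to $K(X, Z) = 0$ for every $X \in \mathfrak{X}(M)$ and every $Z$ orthogonal to $\xi$. In particular, taking $X = Z$ with $X \perp \xi$ yields $K(X, X) = 0$, and the last bullet of Corollary \ref{5.4} then gives $\mathcal{K}_{\phi} = 0$.

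I do not foresee a real obstacle here; the only mild subtlety is recognizing that the apparently restrictive hypothesis $K(X,\phi Y) = 0$ is in fact equivalent, via the surjectivity of $\phi$ onto $\xi^{\perp}$, to the condition $K(X, X) = 0$ for $X \perp \xi$, which is exactly the characterization of $\mathcal{K}_{\phi} = 0$ already supplied by Corollary \ref{5.4}. An alternative and equally quick route, if one prefers not to invoke the image description of $\phi$ explicitly, is to use Lemma \ref{4.5}: the chain of equivalences $K(X,\phi Y) = 0 \iff \phi K(X,Y) = 0 \iff K(X,Y) \parallel \xi$ reduces the problem to writing $K(X,Y) = f(X,Y)\xi$, and then one applies the cubic-form symmetry together with Lemma \ref{4.3} to pin down $f(X,Y) = \lambda\,\eta(X)\,\eta(Y)$, which recovers the second condition of Proposition \ref{5.5} directly.
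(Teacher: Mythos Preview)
Your proposal is correct, and in fact cleaner than the paper's own argument in both directions.

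For the forward implication, the paper invokes Proposition~\ref{5.5} (as you do) but then routes the conclusion through Lemma~\ref{5.6}: it expands $(\nabla^{\circ}_X\phi)Y=(\nabla_X\phi)Y+2\phi K(X,Y)$, uses $K=\lambda\,\eta\otimes\eta\otimes\xi$ to kill $\phi K(X,Y)$, deduces $(\nabla^{\circ}_X\phi)Y=(\nabla_X\phi)Y$, and unwinds this back to $K(X,\phi Y)=\phi K(X,Y)$, whence $K(X,\phi Y)=0$. Your one-line substitution $K(X,\phi Y)=\eta(X)\eta(\phi Y)K(\xi,\xi)=0$ via $\eta\circ\phi=0$ reaches the same endpoint without the detour through the connections; nothing is lost and the argument is shorter.

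For the reverse implication, the paper fixes $X\perp\xi$, uses $K(X,\phi X)=0\Rightarrow\phi K(X,X)=0\Rightarrow K(X,X)\parallel\xi$, then rewrites $g(K(X,X),\xi)=g(K(X,\xi),X)$ and appeals to Lemma~\ref{4.3} to force $K(X,X)=0$, finishing with Corollary~\ref{5.4}. Your main route---observing that $\operatorname{Im}\phi=\xi^{\perp}$ pointwise (from $\eta\circ\phi=0$ and $\operatorname{rank}\phi=2n$), so the hypothesis already gives $K(X,Z)=0$ for all $Z\perp\xi$---bypasses the $K(X,\xi)$ computation entirely and lands on the same criterion from Corollary~\ref{5.4}. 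Your alternative route through Lemma~\ref{4.5} and the cubic-form symmetry is essentially the paper's argument recast. Either way, the proof goes through; your primary approach is the more economical of the two.
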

\begin{proof}
	According to Proposition \ref{5.5} and Lemma \ref{5.6}, we have:
	\begin{align*}
		\mathcal{K} _{\phi} = 0
		&\Longrightarrow 
		(\nabla^{\circ}_X \phi)Y = (\nabla_X \phi)Y + 2\phi K(X,Y)\\
		&\Longrightarrow 
		(\nabla^{\circ}_X \phi)Y = (\nabla_X \phi)Y + 2\phi K(\lambda\eta(X)\eta(Y)\xi)\\
		&\Longrightarrow (\nabla^{\circ}_X \phi)Y = (\nabla_X \phi)Y\\
		&\Longrightarrow 
	     \nabla^{\circ}_X \phi Y - \phi \nabla^{\circ}_X Y =
	     \nabla_X \phi Y - \phi \nabla_X Y\\
	    &\Longrightarrow
	     \nabla_X \phi Y - \nabla^{\circ}_X \phi Y =
	     \phi(\nabla_X Y - \nabla^{\circ}_X Y)\\
	    &\Longrightarrow K(X, \phi Y) = \phi K(X,Y)\\
	    &\Longrightarrow K(X, \phi Y) =0.
\end{align*}
Conversely, suppose that $X$ is a vector field orthogonal to $\xi$.
In this case, we have:
\begin{align*}
	K(X, \phi X) =0 &\Longrightarrow \phi K(X, X) =0 \\
	&\Longrightarrow K(X, X) = g(K(X, X), \xi) \xi \\
	&\Longrightarrow K(X, X) = g(K(X, \xi), X) \xi \\
	&\Longrightarrow K(X, X) = \lambda \eta(X)g(X,\xi) \xi =0 \\
	&\Longrightarrow K(X, X) = \lambda \eta(X) \eta(X) \xi =0.
\end{align*}
Now, by applying Corollary \ref{5.4}, we conclude that the $\phi$-compatible $K$-curvature 
$\mathcal{K} _{\phi}$ is equal to zero.	
\end{proof}
From the propositions and results established above, we obtain the following theorem:
\begin{theorem} \label{5.8}
Let $(M, \phi, \xi, \eta, g, \nabla = \nabla^\circ + K)$ be an almost contact statistical manifold. Then the following statements are equivalent:
\begin{itemize}
\item \(\mathcal{K} _{\phi}=0\),
\item \( \mathcal{K} _{\phi}^{S} = \mathcal{K} _{\phi}^{\circ}\),
\item \( K=\lambda \eta \otimes \eta \otimes \xi = \eta \otimes \eta \otimes K(\xi , \xi) \),
\item \( [K, K]=0\ \),
\item \( S = R^{\circ}\),
\item \( \forall X \in \mathfrak{X}(M), X \perp \xi ; \; K(X, X) = 0\),
\item \( \forall X \in \mathfrak{X}(M) ; \; K(X, \phi X) = 0\),
\item \( \forall X \in \mathfrak{X}(M) ; \; \phi K(X, X) = 0\),
\item  \( \forall X \in \mathfrak{X}(M) ; \; K(X, X) \parallel \xi\).
\end{itemize}
\end{theorem}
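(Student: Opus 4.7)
The plan is to harvest the equivalences from the propositions and lemmas already established in the paper rather than carry out any new computation. Essentially every equivalence on the list has been proved in some form earlier; the task here is to organize them into a single cycle anchored at condition $(1)$, namely $\mathcal{K}_\phi = 0$.

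First, Proposition \ref{5.5} immediately yields $(1) \iff (3) \iff (4)$. The decomposition $S = R^\circ + [K, K]$ stated in the proposition just before Theorem \ref{5.3} promotes $(4) \iff (5)$ to a tensor-level identity. The additive relation $\mathcal{K}_\phi^S = \mathcal{K}_\phi^\circ + \mathcal{K}_\phi$ between the three sectional curvatures gives $(1) \iff (2)$. The fourth bullet of Corollary \ref{5.4} is precisely $(1) \iff (6)$. Finally, the proposition immediately preceding Theorem \ref{5.8} establishes that $\mathcal{K}_\phi = 0$ is equivalent to $K(X, \phi Y) = 0$ for all $X, Y$; combining this with the polarization bullet of Lemma \ref{4.5} equating the vanishing of $K(X, \phi Y)$ with that of $K(X, \phi X)$ yields $(1) \iff (7)$.

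It remains to connect $(7)$, $(8)$, and $(9)$. The Lemma \ref{4.5} bullet $K(X, \phi Y) = 0 \iff \phi K(X, Y) = 0$, together with the polarization $\phi K(X, Y) = 0 \iff \phi K(X, X) = 0$, gives $(7) \iff (8)$; and the bullet $\phi K(X, Y) = 0 \iff K(X, Y) \parallel \xi$, together with its polarization $K(X, Y) \parallel \xi \iff K(X, X) \parallel \xi$, yields $(8) \iff (9)$. I do not anticipate a genuine obstacle. The only subtle point is quantifier bookkeeping: statements $(6)$–$(9)$ are phrased in terms of a single universally quantified $X$, whereas the bullets of Lemma \ref{4.5} typically involve two vector fields. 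Lemma \ref{4.5} is assembled precisely to supply the polarization identities needed for this collapse, so every transition in the cycle is justified by invoking a single named bullet.
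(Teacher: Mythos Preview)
Your proposal is correct and matches the paper's approach exactly: the paper states Theorem \ref{5.8} with the single sentence ``From the propositions and results established above, we obtain the following theorem,'' offering no further argument, and your write-up simply makes explicit which prior results (Proposition \ref{5.5}, Corollary \ref{5.4}, Lemma \ref{4.5}, the unnumbered proposition on $K(X,\phi Y)=0$, and the identity $S=R^\circ+[K,K]$) supply each equivalence.
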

\begin{corollary} \label{5.9}
	Let $(M, \phi, \xi, \eta, g, \nabla = \nabla^\circ + K)$ be an almost contact statistical manifold with $\mathcal{K} _{\phi}=0$.
	Then, if it further holds that $K(\xi,\xi) = 0$,
	it follows that $K = 0$.
\end{corollary}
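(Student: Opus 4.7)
The plan is to apply Theorem~\ref{5.8} as a black box. Since $\mathcal{K}_\phi = 0$ by hypothesis, Theorem~\ref{5.8} furnishes the structural identity $K = \eta \otimes \eta \otimes K(\xi,\xi)$, which already pins the entire difference tensor down to its value on the Reeb vector field. I would then simply substitute the second hypothesis $K(\xi,\xi) = 0$ into this formula to obtain $K = \eta \otimes \eta \otimes 0 = 0$, completing the argument.

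An equally short route goes via the scalar $\lambda = g(K(\xi,\xi),\xi)$ introduced in Lemma~\ref{4.3}. The vanishing of $K(\xi,\xi)$ immediately forces $\lambda = 0$ (this is exactly the content of the corollary following Lemma~\ref{4.3}), and then the alternative presentation $K = \lambda\,\eta \otimes \eta \otimes \xi$ from Theorem~\ref{5.8} again yields $K = 0$. Either route is a one-line deduction once Theorem~\ref{5.8} is in hand.

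There is no genuine obstacle here: all the real work has been done in Theorem~\ref{5.8}, which reduces a tensor a priori depending on two vector-field inputs to a single scalar multiple of $\eta \otimes \eta \otimes \xi$. The role of this corollary is just to record that the one remaining scalar degree of freedom in $K$ is governed exactly by $K(\xi,\xi)$, so killing that single value trivialises the statistical structure and collapses $\nabla$ to the underlying Levi-Civita connection $\nabla^\circ$.
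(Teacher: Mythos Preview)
Your proposal is correct and matches the paper's approach: the paper states this as a corollary with no separate proof, leaving it as an immediate consequence of Theorem~\ref{5.8}, exactly as you describe. Both of your routes---via $K=\eta\otimes\eta\otimes K(\xi,\xi)$ or via $\lambda=0$---are the intended one-line deductions.
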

\begin{theorem}
	Assume that $(M, \phi, \xi, \eta, g, \nabla = \nabla^\circ + K)$ is a contact statistical manifold such that $\mathcal{K} _{\phi}=0$.
	Then, the vector field $\xi$ is geodesic with respect to the statistical connection $\nabla$ (i.e., $\xi$ is $\nabla$-geodesic) if and only if $K = 0$.
	
\end{theorem}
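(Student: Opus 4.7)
The plan is to reduce the $\nabla$-geodesic condition on $\xi$ to a statement purely about $K(\xi,\xi)$. Decomposing $\nabla = \nabla^\circ + K$ gives
\begin{align*}
\nabla_\xi \xi = \nabla^\circ_\xi \xi + K(\xi,\xi),
\end{align*}
so I need to control the Levi--Civita term. Here I would invoke the standard fact from the theory of contact metric manifolds (the setting of \cite{5}, and the reason the statement requires "contact" rather than merely "almost contact") that in a contact metric manifold the Reeb vector field is a unit geodesic vector field for the Levi--Civita connection: $\nabla^\circ_\xi \xi = 0$. With this identity in hand, the condition that $\xi$ be $\nabla$-geodesic is equivalent to $K(\xi,\xi) = 0$.

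For the forward direction, assume $\xi$ is $\nabla$-geodesic. The reduction above yields $K(\xi,\xi) = 0$. Combining this with the standing hypothesis $\mathcal{K}_\phi = 0$, Corollary \ref{5.9} applies directly and gives $K = 0$. The converse is essentially automatic: if $K = 0$ then $\nabla = \nabla^\circ$, so $\nabla_\xi\xi = \nabla^\circ_\xi\xi = 0$ by the same Levi--Civita identity, and $\xi$ is $\nabla$-geodesic.

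The only real obstacle is justifying $\nabla^\circ_\xi \xi = 0$ in the contact metric setting; this is classical, stemming from the formula $\nabla^\circ_X \xi = -\phi X - \phi h X$ with $h = \tfrac{1}{2}\mathcal{L}_\xi \phi$ satisfying $h\xi = 0$ (see \cite{5}). Note that for a general almost contact metric structure this identity can fail, so the strengthened "contact" hypothesis is genuinely used. Every other ingredient (Lemma \ref{4.3} to rewrite $K(\xi,\xi) = \lambda\xi$, Corollary \ref{5.9} to conclude $K = 0$) has already been established in the paper, so the proof itself should occupy only a few lines once the contact-metric identity has been cited.
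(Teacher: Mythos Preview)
Your proposal is correct and follows essentially the same route as the paper: both arguments decompose $\nabla_\xi\xi = \nabla^\circ_\xi\xi + K(\xi,\xi)$, invoke the contact-metric identity $\nabla^\circ_X\xi = -\phi X - \phi hX$ from \cite{5} together with $h\xi = 0$ to obtain $\nabla^\circ_\xi\xi = 0$, and then apply Corollary~\ref{5.9}. Your write-up is in fact slightly more explicit about the converse direction than the paper's own proof.
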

\begin{proof}
	By Lemma 6.2 from \cite{5}, for every contact metric manifold $(M, \phi, \xi, \eta, g)$ we have
	\begin{align*}
		\nabla^{\circ}_\xi X = -\phi X - \phi h X,
	\end{align*}
	where $h = \frac{1}{2} \mathcal{L}_\xi \phi $. Since
	$h \xi = (\frac{1}{2} \mathcal{L}_\xi \phi)(\xi) = [\xi,\phi \xi] - \phi [\xi, \xi] = 0$,
	this immediately implies $\nabla^{\circ}_\xi \xi = 0$.
	
	Moreover, using the relation $K(\xi,\xi) = \nabla_\xi \xi = 0$,
	based on the Corollary \ref{5.9}, we clearly see that the vector field $\xi$ is geodesic with respect to the statistical connection $\nabla$ if and only if $K = 0$.
\end{proof}
\begin{theorem}\label{5.11}
	Assume that $(M, \phi, \xi, \eta, g, \nabla = \nabla^\circ + K)$ is a cosymplectic statistical manifold where the $\phi$-sectional $K$-curvature $\mathcal{K} _{\phi}$ is zero. Then,
	$\nabla_\xi \xi = 0$ if and only if $K = 0$.
\end{theorem}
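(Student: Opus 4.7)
The plan is to use Theorem \ref{5.8} to reduce the $\phi$-sectional $K$-curvature hypothesis to the explicit form $K = \lambda\,\eta\otimes\eta\otimes\xi$, then exploit the cosymplectic condition to force $\nabla^\circ_\xi\xi = 0$, and finally invoke Corollary \ref{5.9} to conclude.

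First, I would establish the auxiliary fact that on any cosymplectic manifold $\nabla^\circ \xi = 0$. Since $\phi\xi = 0$ and $\nabla^\circ \phi = 0$, for any $X\in\mathfrak{X}(M)$ we have
\begin{align*}
0 = \nabla^\circ_X(\phi\xi) = (\nabla^\circ_X\phi)\xi + \phi(\nabla^\circ_X \xi) = \phi(\nabla^\circ_X \xi),
\end{align*}
so by Lemma \ref{4.5} the vector field $\nabla^\circ_X\xi$ is parallel to $\xi$; writing $\nabla^\circ_X\xi = f(X)\xi$ and differentiating $g(\xi,\xi)=1$ yields $f(X)=0$, hence $\nabla^\circ_X\xi = 0$ for every $X$. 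In particular $\nabla^\circ_\xi\xi = 0$.

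Next, I combine this with the hypothesis $\mathcal{K}_\phi = 0$. By Theorem \ref{5.8}, the vanishing of the $\phi$-sectional $K$-curvature is equivalent to $K = \lambda\,\eta\otimes\eta\otimes\xi$ where $\lambda = g(K(\xi,\xi),\xi)$, so in particular $K(\xi,\xi) = \lambda\xi$. Using the decomposition $\nabla = \nabla^\circ + K$ and the previous step, I compute
\begin{align*}
\nabla_\xi \xi = \nabla^\circ_\xi \xi + K(\xi,\xi) = \lambda \xi.
\end{align*}
Hence $\nabla_\xi\xi = 0$ if and only if $\lambda = 0$, which is precisely $K(\xi,\xi) = 0$. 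Corollary \ref{5.9} then upgrades this to $K = 0$, giving the forward implication. For the reverse implication, if $K = 0$ then $\nabla = \nabla^\circ$, and the cosymplectic identity $\nabla^\circ_\xi \xi = 0$ immediately yields $\nabla_\xi \xi = 0$.

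The only nontrivial ingredient is the identity $\nabla^\circ \xi = 0$ for cosymplectic manifolds; once this is in hand, the theorem is a short algebraic consequence of Theorem \ref{5.8} and Corollary \ref{5.9}. I expect no real obstacle beyond being careful that the paper's definition of cosymplectic (requiring only $\nabla^\circ \phi = 0$) is strong enough to produce $\nabla^\circ \xi = 0$, which the short argument above confirms.
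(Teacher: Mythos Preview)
Your proof is correct and follows essentially the same approach as the paper: both establish $\nabla^\circ_\xi\xi = 0$ from the cosymplectic condition via $\phi(\nabla^\circ_X\xi)=0$ together with differentiating $g(\xi,\xi)=1$, and then invoke Corollary~\ref{5.9} to pass from $K(\xi,\xi)=0$ to $K=0$. Your version is slightly more explicit in computing $\nabla_\xi\xi = \lambda\xi$ via Theorem~\ref{5.8}, but the logical structure is identical.
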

\begin{proof}
	To establish the theorem, it suffices to verify $\nabla^{\circ}_\xi \xi = 0$, 
	based on the Corollary . Since, in every cosymplectic manifold, $\nabla^{\circ}\phi = 0$, 
	it follows that
	 $\phi \nabla^{\circ}_X \xi = \nabla^{\circ}_X \phi \xi = 0$. Thus, $\nabla^{\circ}_\xi$ and $\xi$ are parallel. On the other hand, given that $X.g(\xi,\xi) = 2g(\nabla^{\circ}_X \xi,\xi) = 0$, 
	we conclude that $\nabla^{\circ}_X \xi$ is orthogonal to $\xi$. Consequently, the vanishing of $\nabla^{\circ}_\xi \xi$ follows.
\end{proof}
\section{$\phi$-Compatible Almost Contact Statistical Structures}
In the study of almost contact statistical manifolds, it is natural to consider statistical connections that are compatible with the structure tensor $\phi$ . Specifically, we say that a statistical connection $\nabla$ is $\phi$-compatible if $\nabla \phi = 0$.

In this section, we investigate the geometric consequences of imposing 
$\nabla \phi=0$. Our main result shows that this condition forces the manifold to become cosymplectic in the statistical sense. That is, the requirement that be preserved by the statistical connection imposes significant restrictions on the geometry of the manifold.
\begin{proposition}
Let $(M, \phi, \xi, \eta, g, \nabla = \nabla^\circ + K)$ be an almost contact statistical manifold. Then, the following statements are equivalent:
\begin{itemize}
	\item $\nabla$ is a $\phi$-compatible statistical connection,
	\item $\forall X,Y\in \mathfrak{X}(M); \nabla _X\phi Y = \phi \nabla_X Y$,
	\item $\forall X,Y\in \mathfrak{X}(M); (\nabla ^{\circ}_X\phi)Y=2\phi K(X,Y)$.
\end{itemize}
\end{proposition}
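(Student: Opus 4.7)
The proof plan is almost completely forced by the definition of $\nabla\phi$ together with Lemma~\ref{5.6}, so I would organise it as a short cycle of implications, each of which is purely algebraic.

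First, I would handle the equivalence of the first two statements by simply unwinding the definition of the covariant derivative of a $(1,1)$-tensor. By definition,
\begin{align*}
(\nabla_X \phi)Y \;=\; \nabla_X(\phi Y) - \phi(\nabla_X Y),
\end{align*}
so the vanishing of $\nabla\phi$, interpreted as $(\nabla_X\phi)Y=0$ for all vector fields $X,Y$, is verbatim the relation $\nabla_X \phi Y = \phi\nabla_X Y$. No geometric input is needed here.

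Next, I would invoke Lemma~\ref{5.6}, which asserts
\begin{align*}
(\nabla^\circ_X \phi)Y \;=\; (\nabla_X \phi)Y + 2\phi K(X,Y).
\end{align*}
Substituting $(\nabla_X\phi)Y=0$ into this identity immediately produces $(\nabla^\circ_X\phi)Y = 2\phi K(X,Y)$, giving the implication from the first statement to the third. The converse is equally direct: if $(\nabla^\circ_X\phi)Y = 2\phi K(X,Y)$, then the same identity forces $(\nabla_X\phi)Y = 0$, which by the first step is the $\phi$-compatibility of $\nabla$. This closes the cycle.

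There is essentially no obstacle. The only thing to be careful about is that the statement $\nabla\phi=0$ is to be read as a tensor equation, i.e.\ $(\nabla_X\phi)Y=0$ for all $X,Y$, rather than as $\nabla_X(\phi Y)=0$; once this is noted, all three reformulations are tautologically linked via the definition of $\nabla\phi$ and the tensorial relation of Lemma~\ref{5.6} between $\nabla$ and $\nabla^\circ$. I would phrase the argument as a compact three-line derivation showing (first)$\Leftrightarrow$(second) by definition and (first)$\Leftrightarrow$(third) by Lemma~\ref{5.6}.
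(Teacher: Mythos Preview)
Your proposal is correct and matches the paper's proof essentially line for line: the paper also observes that $(\nabla_X\phi)Y=\nabla_X\phi Y-\phi\nabla_X Y$ gives (first)$\Leftrightarrow$(second), and then invokes Lemma~\ref{5.6} to obtain (first)$\Leftrightarrow$(third).
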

\begin{proof}
	Since the covariant derivative of the $(1,1)$-tensor field $\phi$ is given by 
	\begin{align*}
		\forall X,Y\in \mathfrak{X}(M); (\nabla _X \phi)Y=\nabla _X\phi Y-\phi\nabla _XY,
	\end{align*}
	 and the relation 
\begin{align*}
	\forall X,Y\in \mathfrak{X}(M); (\nabla^{\circ} _X \phi)Y=(\nabla _X \phi)Y+2\phi K(X,Y)
\end{align*}	 
	  holds by Lemma \ref{5.6}, it is straightforward to verify that the three statements above are equivalent.
\end{proof}
\begin{proposition}
	If $(M, \phi, \xi, \eta, g, \nabla = \nabla^\circ + K)$ be an almost contact statistical manifold with $\phi$-compatible statistical connection, then we have $\nabla_X\xi \parallel\xi$ and $\nabla^{\circ}_X\xi \parallel\xi$.
\end{proposition}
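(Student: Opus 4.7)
My plan is to exploit the $\phi$-compatibility of $\nabla$, combined with two algebraic facts already in hand: $\phi\xi = 0$ from the almost contact axioms, and $K(X,\xi) = \lambda\eta(X)\xi$ from Lemma \ref{4.3}. The characterization $\phi Z = 0 \Longleftrightarrow Z \parallel \xi$ supplied by Lemma \ref{4.5} will serve as the final bridge from a vanishing-under-$\phi$ statement to the desired parallelism.

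First I would establish $\nabla_X \xi \parallel \xi$. By the preceding proposition, the $\phi$-compatibility of $\nabla$ is equivalent to the identity $\nabla_X \phi Y = \phi \nabla_X Y$. Specializing to $Y = \xi$ and using $\phi\xi = 0$ yields
\[
0 \;=\; \nabla_X(\phi\xi) \;=\; \phi\,\nabla_X \xi,
\]
and Lemma \ref{4.5} then gives $\nabla_X \xi \parallel \xi$ immediately.

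For the companion statement $\nabla^\circ_X \xi \parallel \xi$, I would simply decompose $\nabla^\circ = \nabla - K$ and write
\[
\nabla^\circ_X \xi \;=\; \nabla_X \xi \;-\; K(X, \xi).
\]
The first term is already parallel to $\xi$ by the previous step, while Lemma \ref{4.3} gives $K(X, \xi) = \lambda\eta(X)\xi$, which is manifestly a multiple of $\xi$. Hence $\nabla^\circ_X \xi$ is a scalar multiple of $\xi$ as well.

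I do not anticipate a genuine obstacle: the argument is a direct specialization of the three equivalent defining identities of the previous proposition combined with Lemmas \ref{4.3} and \ref{4.5}. The only point requiring care is to notice that the $\phi$-compatibility assumption is on $\nabla$, \emph{not} on $\nabla^\circ$ (which in general is not $\phi$-compatible unless the manifold is cosymplectic), so the statement for $\nabla^\circ$ must be routed through the $K(X,\xi)$ formula rather than through a direct $\phi$-invariance argument.
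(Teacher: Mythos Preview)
Your proposal is correct and uses exactly the same ingredients as the paper's proof: the $\phi$-compatibility identity $\nabla_X\phi Y=\phi\nabla_X Y$ specialized to $Y=\xi$, Lemma~\ref{4.3} for $K(X,\xi)\parallel\xi$, and Lemma~\ref{4.5} for the equivalence $\phi Z=0\Leftrightarrow Z\parallel\xi$. The only difference is the order: the paper starts from $K(X,\xi)\parallel\xi$, deduces $\phi(\nabla_X\xi-\nabla^{\circ}_X\xi)=0$, then invokes $\phi\nabla_X\xi=0$ (from $\phi$-compatibility) to conclude $\phi\nabla^{\circ}_X\xi=0$ and finally circles back to $\nabla_X\xi\parallel\xi$; your route---first $\nabla_X\xi\parallel\xi$ directly, then $\nabla^{\circ}_X\xi$ by subtraction---is more linear and arguably cleaner, but the logical content is identical.
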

\begin{proof}
	First, from Lemma \ref{4.3} we conclude that $K(X,\xi) \parallel \xi$. Then, due to one-dimensionality of $Ker(\eta)$ and the $\phi$-compatibility of the statistical connection $\nabla$, we can say:
	\begin{align*}
		K(X,\xi) \parallel \xi \; &\Longrightarrow (\nabla_X \xi - \nabla^{\circ}_X \xi) \parallel \xi\\
		&\Longrightarrow \phi(\nabla_X \xi - \nabla^{\circ}_X \xi)=0\\
		&\Longrightarrow \phi (\nabla ^\circ _X \xi)=0\\
		&\Longrightarrow \nabla^{\circ}_X \xi \parallel \xi \\
		&\Longrightarrow\nabla_X \xi \parallel \xi.
	\end{align*}	
\end{proof}
\begin{lemma} \label{6.3}
	If $(M, \phi, \xi, \eta, g, \nabla = \nabla^\circ + K)$ be an almost contact statistical manifold with $\phi$-compatible statistical connection, then 
	\begin{align*}
	\forall Y, Z \in \mathfrak{X}(M) ;(\nabla_X g)(Y, \phi Z) = -(\nabla_X g)(Z, \phi Y).
	\end{align*}
\end{lemma}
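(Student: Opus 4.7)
The approach is to recognize that, under the hypothesis $\nabla\phi = 0$, the quantity $(\nabla_X g)(Y, \phi Z)$ is nothing but the covariant derivative of the natural 2-form $\omega(Y,Z) := g(Y, \phi Z)$, and then exploit skew-symmetry of this 2-form.

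First, I would recall from Section 3 that every almost contact metric manifold satisfies $g(\phi A, B) + g(A, \phi B) = 0$. This tells us that $\omega(Y, Z) := g(Y, \phi Z)$ is skew-symmetric in $Y, Z$. Next, I would use the equivalence established in the preceding proposition: $\phi$-compatibility of $\nabla$ is equivalent to $\nabla_X \phi W = \phi \nabla_X W$ for all $X, W$. I would then expand directly from the definition of $\nabla_X g$:
\begin{align*}
(\nabla_X g)(Y, \phi Z) &= X\, g(Y, \phi Z) - g(\nabla_X Y, \phi Z) - g(Y, \nabla_X(\phi Z)) \\
&= X\, \omega(Y, Z) - \omega(\nabla_X Y, Z) - g(Y, \phi \nabla_X Z) \\
&= X\, \omega(Y, Z) - \omega(\nabla_X Y, Z) - \omega(Y, \nabla_X Z) \\
&= (\nabla_X \omega)(Y, Z).
\end{align*}

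Finally, the covariant derivative of a skew-symmetric 2-tensor is itself skew-symmetric, so
\[
(\nabla_X g)(Y, \phi Z) = (\nabla_X \omega)(Y, Z) = -(\nabla_X \omega)(Z, Y) = -(\nabla_X g)(Z, \phi Y),
\]
which is the claimed identity. There is no genuine obstacle: once one notices that the hypothesis $\nabla \phi = 0$ is precisely what allows $\phi$ to be pulled outside the connection and turned into the skew fundamental 2-form, the result is a one-line consequence. The only minor care needed is to keep track of the sign produced by the skew-symmetry $g(\phi A, B) = -g(A, \phi B)$, which is used both in identifying $\omega$ as a 2-form and implicitly when matching the two sides of the asserted equality.
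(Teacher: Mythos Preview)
Your proof is correct and follows essentially the same route as the paper: both expand $(\nabla_X g)(Y,\phi Z)$ from the definition, invoke $\phi$-compatibility to commute $\phi$ past $\nabla_X$, and use the skew-adjointness $g(\phi A,B)=-g(A,\phi B)$ to obtain the sign flip. Your only cosmetic difference is that you package the computation via the fundamental $2$-form $\omega(Y,Z)=g(Y,\phi Z)$ and the general fact that $\nabla_X$ preserves skew-symmetry, whereas the paper carries out the same manipulations term by term without naming $\omega$.
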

\begin{proof}
	A straightforward calculation, based on the covariant derivative of the Riemannian metric $g$ and the $\phi$-compatibility of the statistical connection $ \nabla $, leads to the desired result.
	\begin{align*}
		(\nabla_X g)(Y, \phi Z) &= 
		X \cdot g(Y, \phi Z) - g(\nabla_X Y, \phi Z) - g(Y, \nabla_X \phi Z) \\
		&= -X \cdot g(\phi Y, Z) + g(\phi \nabla_X Y, Z) - g(Y, \phi \nabla_X Z) \\
		&= -X \cdot g(\phi Y, Z) + g(\nabla_X \phi Y, Z) + g(\phi Y, \nabla_X Z) \\
		&= -(\nabla_X g)(\phi Y, Z) \\
		&= -(\nabla_X g)(Z, \phi Y).
	\end{align*}
\end{proof}
Now, we define the mapping \( \varPsi \) as  
\[
\varPsi : \mathfrak{X}(M) \longrightarrow A^2(M)
\]
\[
X \longmapsto \varPsi_X
\]
 where 
 \[
 \varPsi_X : \mathfrak{X}(M) \times \mathfrak{X}(M) \longrightarrow C^\infty(M)
 \]
 \[
\varPsi_X(Y, Z) := (\nabla_X g)(Y, \phi Z)
 \]
  holds. Note that, according to Lemma \ref{6.3}, and given that \( \nabla _Xg \) is a covariant tensor of rank 2 on \( M \), it follows that \(\varPsi_X \) defines a differential 2-form on \( M \). Moreover, it can be seen that \( \varPsi \) is an \( C^\infty (M) \)-module homomorphism.
  \begin{proposition}
   If \( M \) is an almost contact statistical manifold with a \(\phi\)-compatible statistical connection, then the mapping \( \varPsi \) has the following properties:
   \begin{itemize}
   	\item $\varPsi_X(Y, Z) = \varPsi_Y(X, Z) = \varPsi_Z(Y, X) $,
   	\item $\varPsi_X(\phi Y, Z) = -\varPsi_X(Y, \phi Z)$,
   	\item $\varPsi_X(\phi Y, \phi Z) = \varPsi_X(Y, Z)$.
   \end{itemize}
  	\end{proposition}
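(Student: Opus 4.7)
The overall strategy is to exploit three pieces of machinery already in place: the total symmetry of $\nabla g$ as a three-tensor (from the statistical structure axiom), the antisymmetry supplied by Lemma~\ref{6.3}, and the explicit form $K(X,\xi)=\lambda\eta(X)\xi$ from Lemma~\ref{4.3} for handling the $\xi$-components that appear when expanding $\phi^2=-I+\eta\otimes\xi$. Property~1 is essentially formal: $\varPsi_X(Y,Z)=\varPsi_Y(X,Z)$ is an immediate swap of the first two slots of $\nabla g$, while $\varPsi_X(Y,Z)=\varPsi_Z(Y,X)$ follows by the chain
\[
\varPsi_Z(Y,X)=(\nabla_Z g)(Y,\phi X)=-(\nabla_Z g)(X,\phi Y)=-(\nabla_X g)(Z,\phi Y)=(\nabla_X g)(Y,\phi Z)=\varPsi_X(Y,Z),
\]
which alternates Lemma~\ref{6.3} with the total symmetry of $\nabla g$.

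Property~2 is the crux of the matter. Applying Lemma~\ref{6.3} to the left side and then expanding $\phi^2=-I+\eta\otimes\xi$ gives
\[
\varPsi_X(\phi Y,Z)=(\nabla_X g)(\phi Y,\phi Z)=-(\nabla_X g)(Z,\phi^2 Y)=(\nabla_X g)(Y,Z)-\eta(Y)(\nabla_X g)(Z,\xi),
\]
while a direct expansion of the right side yields
\[
-\varPsi_X(Y,\phi Z)=-(\nabla_X g)(Y,\phi^2 Z)=(\nabla_X g)(Y,Z)-\eta(Z)(\nabla_X g)(Y,\xi).
\]
Comparing, the identity reduces to $\eta(Y)(\nabla_X g)(Z,\xi)=\eta(Z)(\nabla_X g)(Y,\xi)$. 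To dispatch this, I would compute $(\nabla_X g)(W,\xi)=-g(K(X,W),\xi)-g(W,K(X,\xi))$: invoking Lemma~\ref{4.3} on the second summand, and using the total symmetry of the cubic form $C(A,B,C)=g(A,K(B,C))$ together with Lemma~\ref{4.3} on the first, both summands collapse into multiples of $\eta(X)\eta(W)$. Hence $(\nabla_X g)(W,\xi)$ is proportional to $\eta(X)\eta(W)$, so both sides of the claimed equality reduce to a common multiple of $\eta(X)\eta(Y)\eta(Z)$.

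Property~3 is then a direct corollary of property~2, applied with $Z$ replaced by $\phi Z$:
\[
\varPsi_X(\phi Y,\phi Z)=-\varPsi_X(Y,\phi^2 Z)=\varPsi_X(Y,Z)-\eta(Z)\varPsi_X(Y,\xi),
\]
and the correction term vanishes because $\varPsi_X(Y,\xi)=(\nabla_X g)(Y,\phi\xi)=0$ thanks to $\phi\xi=0$. The only non-routine step in the whole argument is the collapse of $(\nabla_X g)(\cdot,\xi)$ into a multiple of $\eta(X)\eta(\cdot)$ inside property~2; this is the step where the almost contact statistical hypothesis on $K$ enters essentially (through Lemma~\ref{4.3}), above and beyond the $\phi$-compatibility of $\nabla$ already encoded in Lemma~\ref{6.3}. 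Properties~1 and~3 are then essentially symbol pushing.
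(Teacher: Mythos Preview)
Your proof is correct. The paper states this proposition without proof, so there is nothing to compare against; your argument fills the gap cleanly. The chain for Property~1 is exactly right (alternate total symmetry of $\nabla g$ with Lemma~\ref{6.3}), and your handling of Property~2 is the substantive part: after expanding $\phi^2=-I+\eta\otimes\xi$ on both sides, the residual identity $\eta(Y)(\nabla_X g)(Z,\xi)=\eta(Z)(\nabla_X g)(Y,\xi)$ does indeed follow because $(\nabla_X g)(W,\xi)=-2g(K(X,W),\xi)=-2g(K(X,\xi),W)=-2\lambda\,\eta(X)\eta(W)$ via Lemma~\ref{4.3} and the total symmetry of the cubic form. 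Property~3 then drops out from Property~2 and $\phi\xi=0$ exactly as you say.

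One remark worth making: in the paper's logical flow this proposition is in fact never invoked downstream; Proposition~\ref{6.5} and Proposition~\ref{6.7} together show that $\varPsi\equiv 0$ under the $\phi$-compatibility hypothesis (since $\varPsi_X$ is simultaneously antisymmetric by Lemma~\ref{6.3} and symmetric by Proposition~\ref{6.5}), which renders all three identities vacuous a posteriori. Your direct proof is therefore the only place where these identities receive an independent verification, and your observation that Lemma~\ref{4.3} is the essential extra ingredient (beyond $\phi$-compatibility) for Property~2 is accurate.
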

  	\begin{proposition} \label{6.5}
  	Let $(M, \phi, \xi, \eta, g, \nabla = \nabla^\circ + K)$ be an almost contact statistical manifold. If \( \nabla \) is a \( \phi \)-compatible statistical connection, then the following equation naturally arises:
  	\begin{align*}
  		\Psi_X(Y, Z) = 2g(\phi K(Y, Z), X).
  	\end{align*}
  	\begin{proof}
  \begin{align*}
  	\Psi_X(Y, Z) &= (\nabla_X g)(Y, \phi Z)  = X \cdot g(Y, \phi Z) - g(\nabla_X Y, \phi Z) - g(Y, \nabla_X \phi Z)\\
  &= X \cdot g(Y, \phi Z)  - g(\nabla^{\circ}_X Y + K(X, Y), \phi Z)  - g(Y, \nabla^{\circ}_X \phi Z + K(X, \phi Z))\\
 &= (\nabla^{\circ}_X g)(Y, \phi Z)  - 2g(K(X, Y), \phi Z)  =2g(\phi K(Y, Z), X) .  	
  \end{align*}
  	\end{proof}
  	\end{proposition}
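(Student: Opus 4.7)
The plan is a direct computation: expand $\Psi_X(Y,Z) = (\nabla_X g)(Y, \phi Z)$ via the product rule for the covariant derivative of $g$, substitute $\nabla = \nabla^\circ + K$, and rearrange using the two algebraic ingredients available — total symmetry of the cubic form $C(A,B,C) := g(A, K(B,C))$ from the statistical side, and the almost contact statistical identity $K(A, \phi B) = -\phi K(A, B)$ together with the skew-symmetry $g(\phi A, B) = -g(A, \phi B)$ from the contact side. The hypothesis of $\phi$-compatibility of $\nabla$ does not enter the algebra itself; its role, via Lemma \ref{6.3}, is to guarantee that $\Psi_X$ is a well-defined $2$-form.

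First I would expand
\begin{align*}
\Psi_X(Y, Z) = X \cdot g(Y, \phi Z) - g(\nabla_X Y, \phi Z) - g(Y, \nabla_X \phi Z),
\end{align*}
treating $\phi Z$ as an ordinary vector field, and plug in $\nabla_X = \nabla^\circ_X + K_X$ in each of the last two slots. The three Levi-Civita contributions reassemble into $(\nabla^\circ_X g)(Y, \phi Z)$, which vanishes by metric compatibility of $\nabla^\circ$, leaving
\begin{align*}
\Psi_X(Y, Z) = -g(K(X, Y), \phi Z) - g(Y, K(X, \phi Z)).
\end{align*}
Total symmetry of $C$ collapses the two summands to a common value, giving $\Psi_X(Y, Z) = -2 g(K(X, Y), \phi Z)$.

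To reach the target form, I would cycle arguments in the cubic form to rewrite $g(K(X, Y), \phi Z) = g(X, K(Y, \phi Z))$, apply the almost contact statistical identity $K(Y, \phi Z) = -\phi K(Y, Z)$ to obtain $-g(X, \phi K(Y, Z))$, and use symmetry of $g$ to read this as $-g(\phi K(Y, Z), X)$. Multiplying by $-2$ yields the claim. The main obstacle is simply keeping track of the correct sequence of symmetries in this last rearrangement: no single manipulation is deep, but the cubic-form symmetry, the almost contact identity, and the skewness of $\phi$ against $g$ must be composed in the right order to land on the stated form $2g(\phi K(Y,Z),X)$.
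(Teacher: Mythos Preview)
Your proposal is correct and follows essentially the same route as the paper: expand $(\nabla_X g)(Y,\phi Z)$, split $\nabla=\nabla^\circ+K$, kill the Levi-Civita part, merge the two $K$-terms via the cubic symmetry, and then cycle through the almost contact statistical identity to reach $2g(\phi K(Y,Z),X)$. Your explicit remark that $\phi$-compatibility is not used in the computation (only in making $\Psi_X$ a $2$-form) is accurate and worth keeping.
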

  	\begin{proposition}
  		Let  $(M, \phi, \xi, \eta, g, \nabla = \nabla^\circ + K)$ be an almost contact statistical manifold with a $\phi$-compatible statistical connection. Then 		$\mathcal{K} _{\phi}=0$ if and only if $\varPsi =0$. 
  	\end{proposition}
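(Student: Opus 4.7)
The plan is to reduce the statement to a direct equivalence between the vanishing of $\phi K$ and the vanishing of $\mathcal{K}_\phi$, relying on Proposition \ref{6.5} to translate $\Psi$ into a statement about $\phi K$, and then invoking Theorem \ref{5.8} (together with Lemma \ref{4.5}) to translate $\phi K = 0$ into $\mathcal{K}_\phi = 0$.

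For the forward direction, I would start from $\mathcal{K}_\phi = 0$ and apply Theorem \ref{5.8} to conclude that $\phi K(X,X) = 0$ for every $X \in \mathfrak{X}(M)$. Then by the corresponding equivalence in Lemma \ref{4.5}, this upgrades to $\phi K(Y,Z) = 0$ for all $Y,Z \in \mathfrak{X}(M)$. Substituting this into the formula $\Psi_X(Y,Z) = 2g(\phi K(Y,Z), X)$ provided by Proposition \ref{6.5} immediately yields $\Psi = 0$. (Alternatively, one can observe that Theorem \ref{5.8} gives $K = \lambda \eta \otimes \eta \otimes \xi$, so $\phi K(Y,Z) = \lambda \eta(Y)\eta(Z)\phi\xi = 0$ directly.)

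For the converse, assuming $\Psi = 0$, Proposition \ref{6.5} gives $g(\phi K(Y,Z), X) = 0$ for all $X,Y,Z \in \mathfrak{X}(M)$. The non-degeneracy of the Riemannian metric $g$ forces $\phi K(Y,Z) = 0$ identically; specializing to $Y = Z = X$ yields $\phi K(X,X) = 0$ for all $X$, and Theorem \ref{5.8} then concludes that $\mathcal{K}_\phi = 0$.

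No serious obstacle is expected: once Proposition \ref{6.5} is available, the argument is essentially a two-line bookkeeping exercise on top of the equivalences already assembled in Theorem \ref{5.8} and Lemma \ref{4.5}. The only subtle point worth stating cleanly is the passage from $\phi K(X,X) = 0$ to $\phi K(Y,Z) = 0$, which is handled by the symmetric bilinearity of $K$ via the corresponding bullet in Lemma \ref{4.5}.
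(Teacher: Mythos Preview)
Your proposal is correct and follows essentially the same route as the paper: both directions hinge on Proposition~\ref{6.5} to identify $\Psi_X(Y,Z)=2g(\phi K(Y,Z),X)$, then invoke Theorem~\ref{5.8} and Lemma~\ref{4.5} (together with the non-degeneracy of $g$) to pass between $\mathcal{K}_\phi=0$ and the vanishing of $\phi K$. The paper's write-up is slightly terser but the logical structure is identical.
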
		
  	\begin{proof}	
  		If 	$\mathcal{K} _{\phi}=0$ is satisfied, then by Theorem \ref{5.8} and lemma \ref{4.5}, the relation $\phi K(Y,Z)=0$ holds true for any vector fields $Y$ and $Z$ on $M$.
  		Now, by Proposition \ref{6.5}, we clearly obtain that $\varPsi =0$.
  		
  		Conversely, if for any vector field $X\in \mathfrak{X}(M)$, the tensor $\varPsi_X =0$ vanishes, then by Proposition \ref{6.5} and the non-degeneracy of the Riemannian metric $g$, we conclude that 
  	\begin{align*}
  \forall Y,Z\in\mathfrak{X}(M);\; \phi K(Y,Z)=0,
  	\end{align*}
  		which is equivalent to $\mathcal{K}_{\phi}=0$.
  \end{proof}
  \begin{proposition}\label{6.7}
  	Let  $(M, \phi, \xi, \eta, g, \nabla = \nabla^\circ + K)$  be an almost contact statistical manifold. If $\nabla$ is a $\phi$-compatible statistical connection, then $\mathcal{K}_{\phi}=0$.
  \end{proposition}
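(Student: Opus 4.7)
The plan is to exploit the tension between the antisymmetry of $\varPsi_X$ as a two-form (recorded in Lemma \ref{6.3}) and the explicit formula $\varPsi_X(Y, Z) = 2g(\phi K(Y, Z), X)$ from Proposition \ref{6.5}. Since $K$ is symmetric while $\varPsi_X$ is antisymmetric in $(Y, Z)$, chaining the two identities should force $\phi K$ to vanish identically on $\mathfrak{X}(M) \times \mathfrak{X}(M)$, after which one of the equivalences collected in Theorem \ref{5.8} will immediately yield $\mathcal{K}_\phi = 0$.

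Concretely, I would proceed as follows. First, use Proposition \ref{6.5} to rewrite both $\varPsi_X(Y, Z)$ and $\varPsi_X(Z, Y)$ in terms of $\phi K$. Second, invoke Lemma \ref{6.3} to equate $\varPsi_X(Y, Z)$ with $-\varPsi_X(Z, Y)$. Third, apply the symmetry $K(Y, Z) = K(Z, Y)$ of the statistical structure to see that the two sides agree only up to sign, which forces $g(\phi K(Y, Z), X) = 0$ for every $X, Y, Z \in \mathfrak{X}(M)$. Fourth, invoke the non-degeneracy of $g$ to upgrade this to $\phi K(Y, Z) = 0$ identically. Finally, quote Theorem \ref{5.8} (specifically the equivalence $\mathcal{K}_\phi = 0 \iff \phi K(X, X) = 0$ for every $X$) to read off the stated conclusion.

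No step presents a genuine obstacle: the argument is essentially a short chain of substitutions resting on the machinery developed in the preceding lemmas and propositions. The only point worth emphasising is that both Lemma \ref{6.3} and Proposition \ref{6.5} explicitly require $\phi$-compatibility, so the hypothesis is genuinely used and is not merely decorative. In effect, the substance of the result has already been packaged into the earlier statements, and the present proposition is the geometric punch line that reads off the consequence for the $\phi$-sectional $K$-curvature.
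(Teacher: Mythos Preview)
Your proposal is correct and follows essentially the same approach as the paper: both arguments observe that $\varPsi_X$ is simultaneously antisymmetric (Lemma \ref{6.3}) and symmetric (Proposition \ref{6.5} together with the symmetry of $K$), hence identically zero, which forces $\phi K = 0$ and thus $\mathcal{K}_\phi = 0$. The only cosmetic difference is that the paper phrases the tension as ``a symmetric $2$-form that is also a differential $2$-form must vanish'' and then invokes the equivalence $\varPsi = 0 \Leftrightarrow \mathcal{K}_\phi = 0$ from the preceding proposition, whereas you unwind the computation explicitly and appeal to Theorem \ref{5.8}.
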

  \begin{proof}
  	It is known that $\varPsi _X$ defines a differential 2-form on $M$ for any vector field $X$ .
  	On the other hand, by Proposition \ref{6.5}, it is clear that $\varPsi_X$ is also a symmetric tensor field on $M$.
  	Therefore, we easily conclude that the map $\varPsi$ , and equivalently the $\phi$-sectional $K$-curvature  $\mathcal{K}_{\phi}$, vanishes.
  \end{proof}
  \begin{theorem}\label{6.8}
  	If  $(M, \phi, \xi, \eta, g, \nabla = \nabla^\circ + K)$ is an almost contact statistical manifold with a $\phi$-compatible statistical connection, then $M$ is a cosymplectic statistical manifold with $\phi$-sectional $K$-curvature  $\mathcal{K}_{\phi}=0$.
  \end{theorem}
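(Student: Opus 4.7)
The plan is to assemble Theorem 6.8 directly from the two immediately preceding results, Proposition 6.7 and Lemma 5.6, together with the long list of equivalences in Theorem 5.8. Since $\phi$-compatibility of $\nabla$ is already assumed, I have two things to certify: first, that $\mathcal{K}_{\phi}=0$; second, that $\nabla^{\circ}\phi=0$ (the defining condition for a cosymplectic statistical manifold). The first is handed to me for free by Proposition 6.7. The only remaining work is to promote this into the stronger statement $\nabla^\circ\phi=0$.

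The key intermediate identity is Lemma 5.6, which says
\begin{align*}
(\nabla^\circ_X\phi)Y=(\nabla_X\phi)Y+2\phi K(X,Y).
\end{align*}
Because $\nabla$ is $\phi$-compatible, the first term on the right vanishes, so
\begin{align*}
(\nabla^\circ_X\phi)Y=2\phi K(X,Y).
\end{align*}
The task thus reduces to showing $\phi K(X,Y)=0$ for all vector fields $X,Y$ on $M$. Here I invoke Theorem 5.8: having established $\mathcal{K}_\phi=0$, one of the listed equivalent conditions is precisely $\phi K(X,X)=0$ for every $X$, and by the polarization-type bullet in Lemma 4.5 this upgrades to $\phi K(X,Y)=0$ for all $X,Y$. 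Substituting back gives $\nabla^\circ_X\phi=0$, which is the cosymplectic condition.

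Putting the pieces together: apply Proposition 6.7 to record $\mathcal{K}_\phi=0$; invoke Theorem 5.8 to convert this into $\phi K(X,Y)\equiv 0$; then plug into the $\phi$-compatible version of Lemma 5.6 to deduce $\nabla^\circ\phi=0$. This hands over both conclusions simultaneously. I do not anticipate a genuine obstacle here, since every step is a direct citation of an already-proved result; the only point requiring a moment's care is the upgrade from the pointwise condition $\phi K(X,X)=0$ to the bilinear identity $\phi K(X,Y)=0$, which is why the corresponding bullet in Lemma 4.5 needs to be quoted explicitly rather than left implicit. All the substantive geometric content has really been absorbed into Proposition 6.7, whose proof exploited the fact that $\Psi_X$ is simultaneously a $2$-form and (via Proposition 6.5) symmetric, hence identically zero.
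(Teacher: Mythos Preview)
Your proof is correct and follows essentially the same route as the paper: invoke Proposition~6.7 to obtain $\mathcal{K}_\phi=0$, then feed this into Lemma~5.6 (with $\nabla\phi=0$) to conclude $\nabla^\circ\phi=0$. The paper's version is terser and leaves the passage from $\mathcal{K}_\phi=0$ to $\phi K(X,Y)=0$ implicit, whereas you spell it out via Theorem~5.8 and the polarization bullet in Lemma~4.5; this extra care is appropriate and makes your argument cleaner.
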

  \begin{proof}
  By Proposition \ref{5.6}, we know that in any almost contact statistical manifold, the relation
 \begin{align*}
 	(\nabla^\circ _X\phi)Y= (\nabla _X\phi)Y+2\phi K(X,Y)
 \end{align*}
  holds. On the other hand, by proposition \ref{6.7}, we know that if $\nabla$ is a $\phi$-compatible statistical connection, then  $\mathcal{K}_{\phi}=0$.
  Therefore, for any vector fields $X$ and $Y$ on $M$, we have $	(\nabla^\circ _X\phi)Y=0$,
  which shows that $M$ is a cosymplectic statistical manifold of constant $\phi$-sectional $K$-curvature zero.
  \end{proof}
\begin{corollary}
	As an immediate consequence, we conclude that Sasakian and Kenmotsu statistical manifolds cannot admit $\phi$-compatible statistical connections. This follows from the fact that neither Sasakian nor Kenmotsu manifolds are cosymplectic, and thus the condition
	$\nabla \phi =0$ is inherently incompatible with their structure.
\end{corollary}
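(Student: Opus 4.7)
The plan is a short argument by contradiction built directly on Theorem \ref{6.8}. Assume an almost contact statistical manifold $(M, \phi, \xi, \eta, g, \nabla = \nabla^\circ + K)$ is Sasakian (respectively, Kenmotsu) and simultaneously admits a $\phi$-compatible statistical connection $\nabla$, so $\nabla\phi = 0$. Theorem \ref{6.8} then forces $(M,\phi,\xi,\eta,g,\nabla)$ to be a cosymplectic statistical manifold, and in particular requires $\nabla^\circ \phi = 0$.

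The next step is to confront this with the classical tensorial characterizations of the Sasakian and Kenmotsu classes, both available from \cite{5}. On a Sasakian manifold one has
\[
(\nabla^\circ_X \phi)Y = g(X,Y)\,\xi - \eta(Y)\,X,
\]
and on a Kenmotsu manifold
\[
(\nabla^\circ_X \phi)Y = g(\phi X, Y)\,\xi - \eta(Y)\,\phi X.
\]
Neither right-hand side vanishes identically on a genuine manifold of the corresponding type: in the Sasakian case, taking any nonzero $X$ with $\eta(X) = 0$ and $Y = X$ yields $\|X\|^2\xi \neq 0$; in the Kenmotsu case, taking $X$ with $\eta(X) = 0$ and $Y = \phi X$ gives the same conclusion via $g(\phi X, \phi X) = \|X\|^2$. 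Either contradicts $\nabla^\circ \phi = 0$, so the assumed $\phi$-compatible $\nabla$ cannot exist.

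The entire argument is immediate once Theorem \ref{6.8} is in place, so there is no real obstacle beyond correctly recalling the defining identities for $\nabla^\circ \phi$ in the Sasakian and Kenmotsu settings and noting their non-triviality. Consequently the corollary follows with essentially no additional computation.
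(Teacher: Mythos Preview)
Your argument is correct and follows exactly the route the paper intends: invoke Theorem \ref{6.8} to force $\nabla^\circ\phi=0$, then observe this contradicts the Sasakian or Kenmotsu condition. The paper itself gives no proof block for this corollary---the justification is embedded in the statement (``neither Sasakian nor Kenmotsu manifolds are cosymplectic'')---so your explicit verification using the classical formulas for $(\nabla^\circ_X\phi)Y$ simply fills in a detail the paper leaves as a known fact.
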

\begin{corollary}
	Let $(M, \phi, \xi, \eta, g)$ be an almost contact statistical manifold, and suppose
	$\nabla = \nabla^\circ + K$ is a nontrivial $\phi$-compatible statistical structure on $M$. In this case, the vector field $\xi$ is geodesic with respect to the Levi-Civita connection $\nabla^{\circ}$, whereas it is not geodesic with respect to the statistical connection $\nabla$.
\end{corollary}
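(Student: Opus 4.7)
The plan is to assemble the corollary from machinery already set up in the paper, chiefly Theorem \ref{6.8}, Corollary \ref{5.9}, and the cosymplectic computation used in the proof of Theorem \ref{5.11}. The nontriviality hypothesis (meaning $K \neq 0$) is what forces $\xi$ to fail to be $\nabla$-geodesic, while the $\phi$-compatibility forces the Levi-Civita side to be trivial.

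First I would invoke Theorem \ref{6.8}: a $\phi$-compatible statistical connection $\nabla$ makes $(M,\phi,\xi,\eta,g)$ a cosymplectic statistical manifold with $\mathcal{K}_\phi = 0$. In particular $\nabla^\circ \phi = 0$, which is the only property of cosymplectic manifolds I will use. Then, repeating the short argument from the proof of Theorem \ref{5.11}, for every vector field $X$ one has $\phi(\nabla^\circ_X \xi) = \nabla^\circ_X(\phi\xi) = 0$, so by Lemma \ref{4.5} the vector $\nabla^\circ_X \xi$ is parallel to $\xi$; on the other hand $X\cdot g(\xi,\xi) = 2g(\nabla^\circ_X \xi,\xi) = 0$ forces $\nabla^\circ_X \xi$ to be orthogonal to $\xi$. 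Combining these gives $\nabla^\circ_X \xi = 0$ for all $X$, and in particular $\nabla^\circ_\xi \xi = 0$, which is the first assertion.

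For the second assertion, I would write $\nabla_\xi \xi = \nabla^\circ_\xi \xi + K(\xi,\xi) = K(\xi,\xi)$ using the first part. The task then reduces to showing $K(\xi,\xi) \neq 0$. Here is where the nontriviality hypothesis enters: since $\mathcal{K}_\phi = 0$ (by Theorem \ref{6.8}) and $K \neq 0$, the contrapositive of Corollary \ref{5.9} gives $K(\xi,\xi) \neq 0$. Hence $\nabla_\xi \xi = K(\xi,\xi) \neq 0$, so $\xi$ is not $\nabla$-geodesic.

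There is no real obstacle here, since every step is a direct citation of earlier results; the only thing to be careful about is to state clearly what \emph{nontrivial} means (namely $K\not\equiv 0$, equivalently $\nabla \neq \nabla^\circ$) so that the appeal to the contrapositive of Corollary \ref{5.9} is unambiguous. The proof will be short, essentially three lines once the correct lemmas are lined up.
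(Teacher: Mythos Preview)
Your proposal is correct and follows essentially the same route as the paper: the paper simply writes ``combining Theorem~\ref{6.8} with Theorem~\ref{5.11} yields the result,'' and what you have done is unpack Theorem~\ref{5.11} into its ingredients (the cosymplectic computation $\nabla^\circ_\xi\xi=0$ plus Corollary~\ref{5.9}). The only cosmetic difference is that you cite Corollary~\ref{5.9} directly rather than via Theorem~\ref{5.11}, which is fine since the latter's proof is itself built on Corollary~\ref{5.9}.
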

\begin{proof}
	Combining Theorem \ref{6.8} with Theorem \ref{5.11} yields the result in a straightforward manner.
\end{proof}

 \begin{example}	
 	Let $M=\mathbb{R}^{2n+1}$ and $\left(x^1, y^1,..., x^n, y^n, z\right)$ be the natural coordinate system on it. Suppose that
 		\begin{align*}
 	&g=(dx^1)^2+(dy^1)^2+...+(dx^n)^2+(dy^n)^2+(dz)^2;\\
 		&\phi(\frac{\partial}{\partial x^i})=\frac{\partial}{\partial y^i},\,\phi(\frac{\partial}{\partial y^i})=-\frac{\partial}{\partial x^i},\, \phi(\frac{\partial}{\partial z})=0;\\
 	&\xi =\frac{\partial}{\partial z};\quad \eta (X)=g(X,\xi).
 		\end{align*}
 	Then $(M,\phi,\xi, \eta,g)$ is an almost contact metric manifold.
 	
 	Moreover, consider the torsion-free affine connection on defined by:
 	\begin{align*}
 	&\nabla _{\frac{\partial}{\partial x^i}}{\frac{\partial}{\partial x^i}}=
 	\nabla _{\frac{\partial}{\partial x^i}}{\frac{\partial}{\partial y^i}}=
 	\nabla _{\frac{\partial}{\partial y^i}}{\frac{\partial}{\partial x^i}}=
 	\nabla _{\frac{\partial}{\partial y^i}}{\frac{\partial}{\partial y^i}}=0,\\
 	&\nabla _{\frac{\partial}{\partial x^i}}{\xi}=
 	\nabla _{\xi}{\frac{\partial}{\partial x^i}}=
 	\nabla _{\frac{\partial}{\partial y^i}}{\xi}=
 	\nabla _{x^i}{\frac{\partial}{\partial y^i}}=0,
 	\nabla _{\xi}{\xi}=\xi.
 \end{align*}
 	Then $(M,\phi,\xi, \eta,g, \nabla)$ is a cosymplectic statistical manifold of constant 
 	$\phi$-sectional $K$-curvature $\mathcal{K}_{\phi}=0$.
 	\begin{proof}
 	Based on the Riemannian metric defined on $M$, it is clear that
 	\begin{align*}
 		g(\frac{\partial}{\partial x^i},\frac{\partial}{\partial x^i})=
 		g(\frac{\partial}{\partial y^i},\frac{\partial}{\partial y^i})=
 		g(\xi,\xi)=1,\;
 		g(\frac{\partial}{\partial x^i},\frac{\partial}{\partial y^i})=
 		g(\frac{\partial}{\partial x^i},\xi)=
 		g(\frac{\partial}{\partial y^i},\xi)=0.	
    \end{align*}
 	Also, by straightforward computations, one can easily verify that
 $(M,\phi,\xi, \eta,g)$
 	is an almost contact statistical manifold.
 	
 	Since the coefficients of the Riemannian metric $g$ on $M$ are constant, the Christoffel symbols and the curvature tensor associated with the Levi-Civita connection vanish.
 	Therefore, if $\nabla^\circ$ denotes the unique Levi-Civita connection of the manifold , then
 	 \begin{align*}
 		\nabla^{\circ} _{\frac{\partial}{\partial x^i}}{\frac{\partial}{\partial x^i}} =
 		 \nabla^{\circ} _{\frac{\partial}{\partial x^i}}\frac{\partial}{\partial y^i}=
 		 \nabla^{\circ} _{\frac{\partial}{\partial x^i}}\xi =
 		  \nabla^{\circ} _{\frac{\partial}{\partial y^i}}\frac{\partial}{\partial y^i}=
 		  \nabla^{\circ} _{\frac{\partial}{\partial y^i}} \xi=\nabla^{\circ}_{\xi}\xi=0.
 	\end{align*}
 	As the space of all torsion-free affine connections on $M$ is an affine space whose associated vector space is
 	$S^2(M;TM)$,
 	the difference tensor
 	$K=\nabla - \nabla^0\in\otimes ^1_2 M$
 	is obtained as
 	\begin{align*}
 		K(\frac{\partial}{\partial x^i},\frac{\partial}{\partial x^i})=
 		K(\frac{\partial}{\partial x^i},\frac{\partial}{\partial y^i})=
 		K(\frac{\partial}{\partial x^i},\xi)=
     	K(\frac{\partial}{\partial y^i},\frac{\partial}{\partial y^i})=
 		K(\frac{\partial}{\partial y^i},\xi)=0,\;
 		K(\xi,\xi)=\xi.
 	\end{align*}
 	It is clearly seen that
 		$\nabla= K + \nabla^0$
 	is a $\phi$-compatible statistical connection on $(M, \phi, \xi, \eta, g)$.
 	Hence, by Theorem \ref{6.8}, we conclude that
 	$(M, \phi, \xi, \eta, g, \nabla = \nabla^\circ + K)$
 	is a cosymplectic statistical manifold with $\mathcal{K}_{\phi}=0$.
 \end{proof}	
\end{example} 
 In the following example, we have directly used an example from reference \cite{18} to support our discussion.
 \begin{example}
 	Let $M=\mathbb{R}^3$ and $\{x,y,z\}$ be the natural coordinate system. Suppose that
 	\begin{align*}
 		g=(dx)^2+(dy)^2+(dz)^2;\:\phi\frac{\partial}{\partial x}=\frac{\partial}{\partial y},\,\phi\frac{\partial}{\partial y}=-\frac{\partial}{\partial x},\, \phi\frac{\partial}{\partial z}=0;\:	\xi =\frac{\partial}{\partial z};\: \eta (X)=g(X,\xi).
 	\end{align*}
 	Then $(M,\phi,\xi, \eta,g)$ is a cosymplectic manifold.
 	Moreover, let us consider the torsion-free affine connection $\nabla$ on $M$, defined by the following relations:
 \begin{align*}
 	&\nabla _{\frac{\partial}{\partial x}}{\frac{\partial}{\partial x}}=-\frac{1}{2}\frac{\partial}{\partial x}+\frac{1}{2}\frac{\partial}{\partial y},\quad
 	\nabla _{\frac{\partial}{\partial x}}{\frac{\partial}{\partial y}}=\frac{1}{2}\frac{\partial}{\partial x}+\frac{1}{2}\frac{\partial}{\partial y},\\
	&\nabla _{\frac{\partial}{\partial y}}{\frac{\partial}{\partial x}}=\frac{1}{2}\frac{\partial}{\partial x}+\frac{1}{2}\frac{\partial}{\partial y},\quad
 	\nabla _{\frac{\partial}{\partial y}}{\frac{\partial}{\partial y}}=\frac{1}{2}\frac{\partial}{\partial x}-\frac{1}{2}\frac{\partial}{\partial y},\\
 &\nabla _{\frac{\partial}{\partial x}}\xi = \nabla _{\xi}\frac{\partial}{\partial x}=\nabla _{\frac{\partial}{\partial y}}\xi = \nabla _{\xi}\frac{\partial}{\partial y}=\nabla _{\xi} \xi=0.
 \end{align*}
 	Then $(M,\phi ,\xi , \eta,g,\nabla)$ is a cosymplectic statistical manifold with $\mathcal{K}_{\phi}=-1$.\\
 	This provides an example of a cosymplectic statistical manifold in which the connection $\nabla$ is not a $\phi$-compatible statistical connection.
 \begin{proof}	
 	First, considering the Riemannian metric $g$ defined on $M$, it is clear that
 \begin{align*}
 	g(\frac{\partial}{\partial x},\frac{\partial}{\partial x})=g(\frac{\partial}{\partial y},\frac{\partial}{\partial y})=g(\xi,\xi)=1,\;
 		g(\frac{\partial}{\partial x},\frac{\partial}{\partial y})=g(\frac{\partial}{\partial x},\xi)=g(\frac{\partial}{\partial y},\xi)=0.
 \end{align*}
 	Furthermore, straightforward calculations easily show that
 $(M, \phi, \xi, \eta, g)$ 	is an almost contact metric manifold. Since the coefficients of the Riemannian metric $g$ on $M$ are constant, the Christoffel symbols and the curvature tensor associated with the Levi-Civita connection vanish. Thus, if $\nabla^{\circ}$ is the unique Levi-Civita connection of the manifold $M$, then
 \begin{align*}
 	\nabla^{\circ} _{\frac{\partial}{\partial x}}{\frac{\partial}{\partial x}} =
 	 \nabla^{\circ} _{\frac{\partial}{\partial x}}\frac{\partial}{\partial y}=
 	 \nabla^{\circ} _{\frac{\partial}{\partial x}}\xi =
 	  \nabla^{\circ} _{\frac{\partial}{\partial y}}\frac{\partial}{\partial y}=
 	  \nabla^{\circ} _{\frac{\partial}{\partial y}} \xi=
 	  \nabla^{\circ}_{\xi}\xi=0.
 \end{align*}
 	Therefore, the condition $\nabla ^{\circ}\phi$ is equal to zero, and we can conclude that  
 	$(M, \phi, \xi, \eta, g)$ is a cosymplectic manifold. Now, considering the difference tensor
 	$K=\nabla - \nabla ^{\circ}$ as a symmetric covariant 2-tensor with values in the tangent bundle $TM$, it follows that $(g,\nabla =\nabla ^{\circ}+K)$ provides a statistical structure on
 	 $(M, \phi, \xi, \eta, g)$ 	that satisfies the definition \ref{4.1}.
 	Moreover, the theorem \ref{6.8} states that in the cosymplectic statistical manifold
 	 $(M, \phi, \xi, \eta, g,\nabla =\nabla ^{\circ}+K)$ the value of $\phi$-sectional $K$-curvature $\mathcal{K}_{\phi}$ is negative. Now we demonstrate that the value of $\mathcal{K}_{\phi}$ is equal to the constant -1. To do so, according to theorem \ref{5.3},	we calculate $\left\| K(X, X) \right\|^2$ and $\|X\|^4$.
 	 \begin{align*}
 	K(X, X) &= f_1^2 K(\frac{\partial}{\partial x}, \frac{\partial}{\partial x})
 	+ 2 f_1 f_2 K(\frac{\partial}{\partial x}, \frac{\partial}{\partial y})
 	+ f_2^2 K(\frac{\partial}{\partial y}, \frac{\partial}{\partial y}) \\
 	&= ( -\frac{1}{2} f_1^2 + f_1 f_2 + \frac{1}{2} f_2^2 ) \frac{\partial}{\partial x}
 	+ ( \frac{1}{2} f_1^2 + f_1 f_2 - \frac{1}{2} f_2^2 ) \frac{\partial}{\partial y},\\
 		\left\| K(X, X) \right\|^2 &= ( -\frac{1}{2} f_1^2 + f_1 f_2 + \frac{1}{2} f_2^2 )^2 +
 		( \frac{1}{2} f_1^2 + f_1 f_2 - \frac{1}{2} f_2^2 )^2 
 	= \frac{1}{2} \left( f_1^2 + f_2^2 \right)^2,\\
 	\|X\|^4 = g(X, X)^2 
 	&= \left( f_1^2 g(\frac{\partial}{\partial x}, \frac{\partial}{\partial x})
 	+ 2 f_1 f_2 g(\frac{\partial}{\partial x}, \frac{\partial}{\partial y})
 	+ f_2^2 g(\frac{\partial}{\partial y}, \frac{\partial}{\partial y}) \right)^2 
 	= \left( f_1^2 + f_2^2 \right)^2.
 \end{align*}
 	Therefore, we can conclude that
 \[
\mathcal{K}_\phi(X) = -2 \frac{\left\| K(X, X) \right\|^2}{\|X\|^4} = -1
 \]
 	holds.
 	\end{proof}
 \end{example}

\end{document}